\newcommand{\res}{{\rm res}}
\def\pt#1{\bm{#1}}
\newcommand{\ideal}[1]{\mathcal{#1}}
\begin{document}

%\ensubject{fdsfd}%
\ensubject{Computer Mathematics}

%%%%%%%%%%%%%%%%%%%%%%%%%%%%%%%%%%%%%%%%%%%%%%%%%%%%%%%
%%% Authors do not modify the information below

\ArticleType{}%{ARTICLES}%栏目
%\SpecialTopic{Progress of Projects Supported by NSFC}%专题
%\SubTitle{Dedicated to Professor Yang Lo on the Occasion of his {\rm 70}th Birthday}%专刊说明
\Year{2017}
\Month{January}%
\Vol{60}
\No{1}
\BeginPage{1} %
\DOI{}
\ReceiveDate{}
\AcceptDate{}
%\OnlineDate{January 1, 2017}
%%%%%%%%%%%%%%%%%%%%%%%%%%%%%%%%%%%%%%%%%%%%%%%%%%%%%%%

%%% title:
%%%   \title{title}{title for citation}
\title{The second discriminant of a univariate polynomial\footnotemark[4]\footnotetext[4]{This paper has been accepted for publication in SCIENCE CHINA Mathematics.}}{}

%%% Corresponding author
%%%%   \author[number]{Full name}{{email@xxx.com}}
%%% General author
%%%   \author[number]{Full name}{}

\author[1,2,3]{Dongming Wang}{{Dongming.Wang@lip6.fr}}
\author[2,$\ast$]{Jing Yang}{{yangjing0930@gmail.com}}

%%% Author information for page head.
%\AuthorMark{Author A}

%%% Authors for citation.
%\AuthorCitation{Author A, Author B, Author C}

%%% Address.
%%%   \address[number]{Address, City {\rm Postcode}, Country}
\address[1]{BDBC--LMIB--School of Mathematics and Systems Science, Beihang University, Beijing {\rm 100191}, China}
\address[2]{SMS--HCIC, Guangxi University for Nationalities, Nanning {\rm 530006}, China}
\address[3]{Centre National de la Recherche Scientifique, Paris {\rm 75794}, France}

%%% Abstract.
\abstract{We define the second discriminant $D_2$ of a univariate polynomial $f$ of degree greater than $2$ as the product of the linear forms $2\,r_k-r_i-r_j$ for all triples of roots $r_i, r_k, r_j$ of $f$ with $i<j$ and $j\neq k, k\neq i$. $D_2$ vanishes if and only if $f$ has at least one root which is equal to the average of two other roots. We show that $D_2$ can be expressed as the resultant of $f$ and a determinant formed with the derivatives of $f$, establishing a new relation between the roots and the coefficients of $f$. We prove several notable properties and present an application of $D_2$.}

%  Keyword.
 \keywords{determinant, discriminant, polynomial ideal, resultant, root configuration}

%%% MSC numbers. Requested items.
 \MSC{13P15, 12Y05}

\maketitle

%%%%%%%%%%%%%%%%%%%%%%%%%%%%%%%%%%%%%%%%%%%%%%%%%%%%%%
%% The main text.
%%%%%%%%%%%%%%%%%%%%%%%%%%%%%%%%%%%%%%%%%%%%%%%%%%%%%%

%====================================================================================
\section{Introduction}
\label{sec:introduction}
%====================================================================================
The discriminant of a univariate polynomial $f=f(x)$ may be defined as a function of the coefficients of $f$ in $x$, whose vanishing is a necessary and sufficient condition for $f$ to have multiple roots for $x$. The term {\em discriminant} was used early by Sylvester in \cite{S1851O} and it will be referred  to as the \emph{first discriminant} hereinafter. The first discriminant of $f$ contains information about the nature of the roots\footnote{For example, if the discriminant of a cubic polynomial $f$ with real coefficients is positive, then $f$ has no complex root \cite{G1990S}.} of $f$ and has played a fundamental role in the study of polynomial equations. It has many remarkable properties \cite{C1993R,GKZ1994D} and has been used in diverse areas ranging from algebraic geometry and Galois theory to bifurcation analysis and number theory.

To define the first discriminant $D_1$ of $f$, one considers the simple form $r_i-r_j$ for any pair of roots $r_i, r_j$ of $f$ with $i\neq j$ and takes the product of all such forms as $D_1$, which can be expressed as the resultant of $f$ and its derivative. In this paper, we define the {\em second discriminant} $D_2$ of $f$ (of degree greater than $2$) as the product of the linear forms
$2\,r_k-r_i-r_j$ for all triples of roots $r_i, r_k, r_j$ of $f$ with $i<j$ and $j\neq k$, $k\neq i$.

More concretely, let
\begin{equation}\label{eqF}
f=x^n+a_{n-1}x^{n-1}+\cdots +a_1x+a_0
\end{equation}
be any univariate polynomial
of degree $n\geq 3$ in $x$ with real or complex coefficients. Let $r_1,\ldots,r_n$ be the $n$ roots of $f$ for $x$ over ${\Bbb C}$, the field of complex numbers. By a \emph{symmetric triple} of roots, we mean a triple $(r_i,r_k,r_j)$ of roots of $f$ with $i<j$ and $j\neq k$, $k\neq i$ such that $r_k=(r_i+r_j)/2$.
Then, obviously, $D_2=0$ if and only if $f$ has a symmetric triple of roots.
We will show that $D_2$ can be expressed as the resultant of $f$ and a determinant formed with the derivatives of $f$, and thus as a polynomial in $a_0,\ldots,a_{n-1}$ with rational coefficients.
Several other properties of $D_2$ will also be proved, highlighting the geometric interest of the symmetric triples of roots.
The second discriminant $D_2$ complements the well-known first discriminant $D_1$ of $f$ in depicting the structural properties such as distribution, position, and configuration of the roots of $f$.

In the following section, the second discriminant $D_2$ for an arbitrary univariate polynomial $f$ of degree $n$ is defined formally in terms of the roots of $f$;
some simple properties of $D_2$ are then proved. In Sections \ref{sec:Construction} and \ref{sec:Properties}, we show that $D_2$ as a polynomial in the coefficients of $f$ is irreducible of total degree $3\,(n-1)(n-2)/2$.  In Sections \ref{sec:resultant} and \ref{sec:ideals}, we elaborate $D_2$ with resultants and ideals from the perspective of modern algebra, which leads to different ways for the construction of $D_2$. In Section \ref{sec:determinant}, we provide exact formulas for the degrees of some determinant polynomials involved in the construction of $D_2$. Finally, an application of $D_2$ to the classification of root configurations is presented and the paper is concluded with some remarks in Section \ref{sec:ApplicationRemarks}.

%====================================================================================
\section{Symmetric Triples of Roots and the Second Discriminant}
\label{sec:Definition}
%====================================================================================
Let $f\in \mathbb{C}[x]$ be as in \eqref{eqF} with $\deg(f, x)=n\geq 3$ and $r_1, \ldots, r_n$ be the $n$ roots of $f$ over $\mathbb{C}$ as above. Consider any two roots $r_i$ and $r_j$. We call $(r_i+r_j)/2$ the \emph{average} of $r_i$ and $r_j$. For any triple $\bm{r}=(r_i, r_k, r_j)$, where
\[f(r_i)=f(r_j)=f(r_k)=0 \quad\mbox{and}\quad i< j, j\neq k, k\neq i,\]
if $r_k$ is the average of $r_i$ and $r_j$, i.e., $r_k=(r_i+r_j)/2$, then $\bm{r}$ is called a \emph{symmetric triple} of roots of $f$. We are interested in the condition under which $f$ has symmetric triples of roots.

Recall that the first discriminant of $f$ may be defined as
\[
D_1= \prod_{1\leq i<j\leq n}(r_i-r_j)^2=\pm\prod_{\scriptsize{\begin{array}{c}
1\leq i,j\leq n\\
i\neq j
\end{array}}}(r_i-r_j).
\]
$D_1=0$ if and only if $f$ has a multiple root.
To obtain the condition under which $f$ has a symmetric triple of roots, we define the second discriminant $D_2$ of $f$ as follows:
\begin{equation}\label{eq:D2}
D_2 = \prod_{
\scriptsize{\begin{array}{c}
1\leq i,j,k\leq n\\
i< j,j\neq k,k\neq i
\end{array}}
}(2\,r_k-r_i-r_j),
\end{equation}
a symmetric polynomial of total degree $n(n-1)(n-2)/2$ in $r_1,\ldots,r_n$. For the sake of simplicity, we shall write $i< j\neq k$ for the range of $i,j,k$ determined by $1\le i,j,k\leq n$ and $i< j$, $j\neq k$, $k\neq i$.
\begin{remark}
$D_1=0$ does not imply $D_2=0$, and vice versa.
\end{remark}

\begin{proposition}
\begin{enumerate}[$($a$)$]
\item If $D_2\neq 0$, then any root of $f$ has multiplicity not greater than $2$.
\item If $D_1\neq 0$ and $D_2=0$, then there exist pairwise distinct $r_i, r_j, r_k$ with $i<j\neq k$ such that $2\,r_k-r_i-r_j=0$.
\end{enumerate}
\end{proposition}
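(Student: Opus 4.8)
The plan is to prove both parts directly from the factored definitions of $D_1$ and $D_2$, exploiting the fact that $r_1,\ldots,r_n$ are listed \emph{with multiplicity}: this is what allows a root of high multiplicity to produce coincidences among factors carrying distinct indices.

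For part (a) I would argue by contraposition. Suppose some root of $f$ has multiplicity at least $3$. Then three distinct indices, which after relabelling I may take to be $a<b<c$, satisfy $r_a=r_b=r_c$. Setting $i=a$, $j=b$, $k=c$ gives indices obeying $i<j$, $j\neq k$, $k\neq i$, so the factor $2\,r_k-r_i-r_j$ genuinely occurs in the product \eqref{eq:D2}; moreover it equals $2\,r_c-r_a-r_b=0$, forcing $D_2=0$. Hence $D_2\neq 0$ implies that every root has multiplicity at most $2$, which is precisely the assertion; the hypothesis $D_1=0$ serves only to place us in the case where a repeated root actually exists.

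For part (b) the hypothesis $D_1\neq 0$ says exactly that the roots $r_1,\ldots,r_n$ are pairwise distinct as complex numbers, so that distinct indices always correspond to distinct root values. The hypothesis $D_2=0$ means that at least one factor of \eqref{eq:D2} vanishes, i.e. $2\,r_k-r_i-r_j=0$ for some indices with $i<j$, $j\neq k$, $k\neq i$. These index conditions render $i,j,k$ pairwise distinct, and combined with the distinctness of all roots they immediately force $r_i,r_j,r_k$ to be pairwise distinct values, as required.

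The argument presents no genuine obstacle beyond careful bookkeeping; the only point warranting attention is the distinction between equality of \emph{indices} and equality of \emph{root values}. In part (a) the two come apart, since three equal-valued roots still carry distinct indices and therefore legitimately index a (degenerate) symmetric triple, whereas in part (b) the hypothesis $D_1\neq 0$ fuses them back together, converting the index conditions of the product directly into distinctness of the corresponding values.
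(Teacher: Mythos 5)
Your proof is correct and follows essentially the same route as the paper: in (a) a root of multiplicity at least $3$ yields three equal-valued roots with distinct indices, hence a degenerate symmetric triple and a vanishing factor of $D_2$ (the paper phrases this as a contradiction rather than a contraposition, a purely cosmetic difference); in (b) the index conditions plus $D_1\neq 0$ give pairwise distinct values exactly as the paper argues. Your explicit remark distinguishing equality of indices from equality of root values is a nice clarification of what the paper leaves implicit, but it is not a different argument.
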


\begin{proof}
(a) Suppose that $f$ has a root with multiplicity greater than $2$; then $r_i=r_j=r_k~(i< j\neq k)$ for some $i,j,k$. This is a special case of
$r_k=(r_i+r_j)/2$,
so $D_2=0$, which leads to contradiction.

(b) $D_1\neq 0$ implies that $r_i, r_j, r_k$ are pairwise distinct for any $i<j\neq k$ and $D_2=0$ implies the existence of $r_i, r_j, r_k$ with $i<j\neq k$ such that $2\,r_k-r_i-r_j=0$.
\end{proof}

\begin{theorem}
$D_2=0$ if and only if $f$ has a symmetric triple of roots.
\end{theorem}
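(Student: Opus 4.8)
The plan is to read the equivalence directly off the product formula \eqref{eq:D2}. First I would invoke the fact that $\mathbb{C}$ is a field, hence an integral domain: a finite product of complex numbers vanishes if and only if at least one of its factors vanishes. Since $D_2$ is precisely the product of the linear forms $2\,r_k-r_i-r_j$ taken over the index set $i<j$, $j\neq k$, $k\neq i$, this observation immediately reduces the claim to analysing when a single factor is zero.

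Next I would note that a factor $2\,r_k-r_i-r_j$ equals zero exactly when $r_k=(r_i+r_j)/2$, i.e.\ when $r_k$ is the average of $r_i$ and $r_j$. By the definition recalled above, a triple $(r_i,r_k,r_j)$ whose indices satisfy $i<j$, $j\neq k$, $k\neq i$ and for which $r_k=(r_i+r_j)/2$ is exactly a symmetric triple of roots of $f$; the auxiliary requirement $f(r_i)=f(r_j)=f(r_k)=0$ holds automatically, since $r_1,\ldots,r_n$ are by hypothesis the roots of $f$. Thus the index triples labelling the factors of $D_2$ are precisely the candidate symmetric triples, and a factor vanishes if and only if the corresponding triple is symmetric.

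Combining the two observations yields the result: $D_2=0$ if and only if some factor $2\,r_k-r_i-r_j$ is zero, if and only if some admissible triple $(r_i,r_k,r_j)$ satisfies $r_k=(r_i+r_j)/2$, if and only if $f$ possesses a symmetric triple of roots. I expect no genuine obstacle here, as the statement is essentially a direct reformulation of the definition of $D_2$; the only point requiring care is to verify that the index constraints $i<j$, $j\neq k$, $k\neq i$ attached to the factors of $D_2$ coincide exactly with those imposed in the definition of a symmetric triple, which they do by construction.
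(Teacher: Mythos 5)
Your proof is correct and follows essentially the same route as the paper's: both directions come down to the observation that the product $\prod_{i<j\neq k}(2\,r_k-r_i-r_j)$ over $\mathbb{C}$ vanishes exactly when some factor does, and that a vanishing factor corresponds precisely to a symmetric triple. You merely make explicit (via the integral-domain property of $\mathbb{C}$ and the matching of index constraints) what the paper's proof leaves implicit, so there is no substantive difference.
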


\begin{proof}
($\Longrightarrow$) $D_2=0$ implies that there exist $r_i$, $r_j$, $r_k$ such that $r_k=(r_i+r_j)/2$. Thus $(r_i, r_k, r_j)$ is a symmetric triple of roots which we seek for.

($\Longleftarrow$) Suppose that $(r_i, r_k, r_j)$ is a symmetric triple of roots that $f$ has. Then $r_k=(r_i+r_j)/2$. It follows that
\[D_2=\prod_{i< j\neq k}(2\,r_k-r_i-r_j)=0.\]

\vspace{-0.5cm}
\end{proof}

The second discriminant $D_2$ defined above is a polynomial in the roots $r_1,\ldots,r_n$ of $f$. This polynomial is symmetric with respect to the roots, so $D_2$ can be expressed as another polynomial in the coefficients $a_0,\ldots,a_{n-1}$ of $f$. We will provide explicit formulas and simple algorithmic approaches for the construction of the polynomial in $a_i$, together with several properties about $D_2$.

%====================================================================================
\section{Expression of the Second Discriminant}
\label{sec:Construction}
%====================================================================================
In this section, we show that the second discriminant $D_2$ of $f$ can be expressed as a polynomial in $a_0,\ldots,a_{n-1}$, the coefficients of $f$. The expression of $D_2$ we have discovered as the resultant of $f$ and the determinant of a shifting matrix $H$ formed with the derivatives $f^{(1)}, \ldots, f^{(n)}$ of $f$, given in the following theorem, appears pretty amazing. It is puzzling how and why the derivatives of $f$ get occurred in $H$ so structurally. We will answer this question in Lemma~\ref{lem:resH}
by linking $H$ to the resultant of two other polynomials derived from $f$.

As usual, denote by $\det(M)$ the determinant of any square matrix $M$ and
by $\res(f, g, x)$ the Sylvester resultant of any two polynomials $f$ and $g$ with respect to $x$.

\begin{theorem}\label{thm:D2} The second discriminant $D_2$ of $f$ is equal to the
resultant of $f$ and a determinant $H$ formed with the derivatives of $f$ with respect to $x$. More precisely,
\[
D_2=\res(f, H, x),
\]
where $H$ is the $(n-2)$th leading principal minor of the following matrix
\begin{equation}\label{matM}
M=\left(
\begin{array}{cccccccc}
\frac{f^{(2)}}{2!} &  \frac{f^{(4)}}{4!} &  \frac{f^{(6)}}{6!} & \cdots \!\!&\! \cdots & \frac{f^{(2 l )}}{(2 l )!}  & \cdots  \!\!&\! \cdots\\ [6pt]
\frac{f^{(1)}}{1!} &  \frac{f^{(3)}}{3!} &  \frac{f^{(5)}}{5!} & \cdots \!\!&\! \cdots & \frac{f^{(2 l -1)}}{(2 l -1)!}  & \cdots  \!\!&\! \cdots\\ [6pt]
0 & \frac{f^{(2)}}{2!} &  \frac{f^{(4)}}{4!} &  \cdots \!\!&\! \cdots & \frac{f^{(2 l -2)}}{(2 l -2)!} & \cdots  \!\!&\! \cdots\\ [6pt]
0 & \frac{f^{(1)}}{1!} &  \frac{f^{(3)}}{3!} &  \cdots \!\!&\! \cdots & \frac{f^{(2 l -3)}}{(2 l -3)!}  & \cdots  \!\!&\! \cdots\\ [6pt]
0 & 0 & \frac{f^{(2)}}{2!} &\cdots \!\!&\! \cdots &\frac{f^{(2 l -4)}}{(2 l -4)!} & \cdots  \!\!&\! \cdots\\ [0pt]
\vdots & \vdots & \vdots & \ddots \!\!&\! &\vdots & \ddots  \!\!&\! \\[-11pt]
\vdots & \vdots & \vdots & \!\!&\!\hspace{-2pt}\raisebox{0.10cm}{\mbox{$\ddots$}} & \vdots & \!\!&\!\hspace{-2pt}\raisebox{0.10cm}{\mbox{$\ddots$}}
\end{array}
\right)
\end{equation}
and $f^{(\mu )}$ denotes the $\mu $th derivative of $f$.
\end{theorem}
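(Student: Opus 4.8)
The plan is to turn the resultant on the right-hand side into a product over the roots and then to prove the statement one root at a time. Since $f$ is monic of degree $n$, the Sylvester resultant collapses to $\res(f,H,x)=\prod_{k=1}^{n}H(r_k)$, where $H$ is read as a polynomial in $x$. On the other hand, grouping the factors of $D_2$ by the ``central'' root gives $D_2=\prod_{k=1}^{n}\prod_{i<j,\,i\neq k,\,j\neq k}(2\,r_k-r_i-r_j)$. So the whole theorem reduces to the single-root identity
\[
H(r_k)=\prod_{\substack{i<j\\ i\neq k,\,j\neq k}}(2\,r_k-r_i-r_j),
\]
to be established for each $k$, with the global sign tracked separately at the end.

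Next I would read off what $H$ means from the Taylor shift of $f$. Writing $f(x+t)=\sum_{m=0}^{n}\frac{f^{(m)}(x)}{m!}t^{m}=P(t^2)+t\,Q(t^2)$ with $P(s)=\sum_{l}\frac{f^{(2l)}(x)}{(2l)!}s^{l}$ and $Q(s)=\sum_{l}\frac{f^{(2l+1)}(x)}{(2l+1)!}s^{l}$, and setting $A(s)=(P(s)-f(x))/s=\sum_{l\geq 1}\frac{f^{(2l)}(x)}{(2l)!}s^{l-1}$, I would observe that the matrix $M$ in \eqref{matM} is exactly the row-interleaved Sylvester matrix in the variable $s$ of $A$ and $B:=Q$: the odd-indexed rows carry the coefficients of $A$, the even-indexed rows those of $B$, each pair gaining one extra leading zero. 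A short row-count using $\deg_s A=\lfloor n/2\rfloor-1$ and $\deg_s B=\lfloor (n-1)/2\rfloor$ shows that $\deg_s A+\deg_s B=n-2$ and that $A$ occurs $\deg_s B$ times and $B$ occurs $\deg_s A$ times for every parity of $n$; hence the $(n-2)$th leading principal minor is the full Sylvester matrix and $H=\pm\,\res(A,B,s)$. This is the structural statement promised for Lemma~\ref{lem:resH} and explains why the derivatives enter $M$ so regularly.

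Third, I would evaluate at a root. At $x=r_k$ we have $f(r_k)=0$, so $f(r_k+t)=t\,\tilde g(t)$ with $\tilde g(t)=\prod_{m\neq k}(t-u_m)$ and $u_m=r_m-r_k$; moreover $A|_{x=r_k}=\hat P$ and $B|_{x=r_k}=Q$ are the even and odd parts of $\tilde g$, i.e. $\tilde g(t)=Q(t^2)+t\,\hat P(t^2)$. Two elementary computations then combine. From the product formula for the resultant, together with $\tilde g(u_m)=0$ (which gives $\tilde g(-u_m)=-2u_m\hat P(u_m^2)$ and $\prod_{m\neq k}u_m=(-1)^{n-1}f'(r_k)$), one gets
\[
\res\big(\tilde g(t),\tilde g(-t),t\big)=(-1)^{n-1}\!\!\prod_{m,m'\neq k}\!(u_m+u_{m'})=2^{\,n-1}f'(r_k)\!\!\prod_{\substack{i<j\\ i,j\neq k}}\!(u_i+u_j)^2 .
\]
On the other hand, the even/odd decomposition yields the identity $\res(\tilde g(t),\tilde g(-t),t)=\pm\,2^{\,n-1}\,\tilde g(0)\,\res(\hat P,Q,s)^2$, which is dimensionally forced and which I would prove by the same root-pairing. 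Since $\tilde g(0)=Q(0)=f'(r_k)$, comparing the two gives $H(r_k)^2=\res(\hat P,Q,s)^2=\prod_{i<j,\,i,j\neq k}(u_i+u_j)^2$, and because $u_i+u_j=-(2\,r_k-r_i-r_j)$ this is the single-root identity up to sign.

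Finally I would assemble and fix the sign. Multiplying the single-root identities over $k$ gives $\res(f,H,x)=\lambda\,D_2$ for a constant $\lambda$ that is a sign times a power of $2$. To pin $\lambda=1$ I would compare the two sides as polynomials in $a_0,\dots,a_{n-1}$, which have the same total degree $3(n-1)(n-2)/2$ recorded in the paper, and match a single coefficient, or equivalently specialize to one convenient $f$ carrying exactly one symmetric triple. I expect the two genuinely delicate points to be precisely this last bookkeeping and the even/odd resultant identity of the third step: passing from the square $\prod(u_i+u_j)^2$ to the correct factor $\prod(u_i+u_j)$, and checking that all the signs and factors of $2$ cancel so that $D_2=\res(f,H,x)$ holds exactly rather than up to a nonzero constant.
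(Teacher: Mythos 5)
Your proposal is correct in outline, but it takes a genuinely different route from the paper's proof. The paper establishes the single-root identity $H(r_k)=\prod_{i<j\neq k}(2\,r_k-r_i-r_j)$ by two direct determinant computations: Lemma~\ref{lem:rootH} shows $H(r_k)=0$ whenever $r_k=(r_i+r_j)/2$ (via column operations after substituting $c_\imath=r_k-r_\imath$), which gives divisibility of $H(r_k)$ by each linear form, and Lemma~\ref{lem:initerm} shows by an inductive determinant manipulation that the leading term of $H(r_k)$ in $r_k$ is exactly $(2\,r_k)^{(n-1)(n-2)/2}$, so the cofactor is forced to be $1$ and no sign or constant ambiguity ever arises. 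You instead identify $H$ as $\pm$ the resultant in $s=y^2$ of the even/odd parts of the Taylor shift $f(x+t)$ --- which is precisely the content of the paper's Lemma~\ref{lem:resH} ($G=H^2$), proved in Section~\ref{sec:ideals} and used there for the ideal-theoretic results rather than for Theorem~\ref{thm:D2} --- and then compute $\res\bigl(\tilde g(t),\tilde g(-t),t\bigr)$ in two ways to obtain $H(r_k)^2=\prod_{i<j,\,i,j\neq k}(2\,r_k-r_i-r_j)^2$, taking a square root at the end. What your route buys is conceptual: it explains why the derivatives populate $M$ so regularly, the very point the paper calls ``puzzling'' and defers to Lemma~\ref{lem:resH}, and it replaces the two heavy determinant manipulations by root-pairing identities (your intermediate formulas, e.g.\ $\tilde g(-u_m)=-2\,u_m\hat P(u_m^2)$ and the $2^{n-1}f'(r_k)$ factor, all check out). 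What it costs is exactly what you flag: the even/odd resultant identity needs care with leading coefficients when degrees drop (for odd $n$ the leading coefficient of $\hat P$ is $-\sum_{m\neq k}(r_m-r_k)$, not a constant, so ``dimensionally forced'' is not automatic), and the per-root signs $\epsilon_k$ must be assembled into a global constant.

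One concrete correction to your final step: to pin down $\lambda$ you cannot ``specialize to one convenient $f$ carrying exactly one symmetric triple,'' since both $\res(f,H,x)$ and $D_2$ vanish identically at such an $f$ and the evaluation determines nothing. You must either evaluate at an $f$ with $D_2\neq 0$ (e.g.\ $f=x^n$ fails too, as all its symmetric triples collapse to $0=0$; take instead a polynomial with generic distinct roots), or compare leading coefficients of $H(r_k)$ in $r_k$ --- which is exactly the computation the paper performs in Lemma~\ref{lem:initerm}, and is the cleanest way to see that $\lambda=1$ rather than merely $\lambda=\pm 1$.
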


Note that
\[\res(f,H,x)=\prod_{k=1}^nH(r_k),\]
where $r_1,\ldots,r_n$ are the $n$ roots of $f$ as before.
To prove Theorem~\ref{thm:D2}, we only need to show that for each $k$, $H(r_k)$ is the product of $2\,r_k-r_i-r_j$ for all $i,j$ with $i< j\neq k$. The proof will be divided into two parts. In the first part, it is shown that for any $i,j$ with $i< j\neq k$, $2\,r_k-r_i-r_j$ is a divisor of $H(r_k)$ (see Lemmas \ref{lem:rootH} and \ref{lem:division}). The second part is devoted to proving that the leading term of $H(r_k)$ with respect to $r_k$ is $(2\,r_k)^{\frac{(n-1)(n-2)}{2}}$
(see Lemma \ref{lem:initerm}).

\begin{lemma}\label{lem:rootH}
If $r_k=(r_i+r_j)/2$ for $i< j\neq k$, then $H(r_k)=0$.
\end{lemma}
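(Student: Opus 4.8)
The plan is to evaluate the determinant $H$ at the single root $x=r_k$ and to exploit the factorization that a symmetric triple forces on $f$. First I would read the entries $f^{(m)}(x)/m!$ of $M$ as the Taylor coefficients of the shifted polynomial, $f(x+t)=\sum_{m\ge 0}\frac{f^{(m)}(x)}{m!}\,t^m$. Setting $x=r_k$ and using $f(r_k)=0$ gives $f(r_k+t)=t\,g_k(t)$, where $g_k(t)=\prod_{l\neq k}\bigl(t-(r_l-r_k)\bigr)$ is monic of degree $n-1$; consequently $f^{(m)}(r_k)/m!$ equals the coefficient of $t^{m-1}$ in $g_k$ for every $m\ge 1$, while the suppressed constant entry $c_0=f$ vanishes at $r_k$ in agreement with the convention $c_m=0$ for $m\le 0$. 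Thus every entry of the matrix underlying $H$, once evaluated at $x=r_k$, is a coefficient of $g_k$.

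The second step is to recognise $H(r_k)$ as the Sylvester determinant of the even and odd parts of $g_k$. Writing $g_k(t)=E_k(t^2)+t\,O_k(t^2)$, the odd rows of $M$ (which carry the even-order derivatives $f^{(2)},f^{(4)},\dots$) become the coefficients of $O_k$ at $x=r_k$, and the even rows (carrying $f^{(1)},f^{(3)},\dots$) become those of $E_k$; a degree count gives $\deg E_k+\deg O_k=n-2$, exactly the size of $H$, so $H(r_k)=\pm\,\res(E_k,O_k,u)$ up to the sign of the row permutation that turns the interleaved pattern into the standard Sylvester block form. Now the hypothesis $r_k=(r_i+r_j)/2$ says precisely that $\pm\alpha$, with $\alpha=(r_j-r_i)/2$, are both roots of $g_k$. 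Since $g_k(\alpha)+g_k(-\alpha)=2E_k(\alpha^2)$ and $g_k(\alpha)-g_k(-\alpha)=2\alpha\,O_k(\alpha^2)$, whenever $\alpha\neq 0$ both $E_k$ and $O_k$ vanish at $u=\alpha^2$, so $E_k$ and $O_k$ share the common root $u=\alpha^2$ and their Sylvester matrix is singular; hence $H(r_k)=0$. (If one prefers to avoid citing the resultant–common-root equivalence, since the resultant description of $H$ itself is deferred to Lemma~\ref{lem:resH}, the same conclusion follows by exhibiting the explicit kernel vector coming from the cofactor polynomials $O_k/(u-\alpha^2)$ and $-E_k/(u-\alpha^2)$, which yields a linear dependence among the rows.)

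The degenerate subcase $r_i=r_j$ must be treated separately: then $r_i=r_j=r_k$ and $\alpha=0$, so $g_k$ has a root of multiplicity at least two at the origin, its two lowest coefficients vanish, and $E_k(0)=O_k(0)=0$ provides the common root $u=0$; the argument closes as before. I expect the main obstacle to be the bookkeeping in the second step, namely verifying that the interleaved even/odd derivative pattern of $M$, together with the vanishing of $c_0$ at $r_k$, reproduces the Sylvester matrix of $E_k$ and $O_k$ \emph{exactly}, in the correct dimension $n-2$ and without being off by a spurious row or column, so that the singularity of that Sylvester matrix genuinely forces $H(r_k)=0$.
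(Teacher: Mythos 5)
Your proof is correct, and it takes a genuinely different route from the paper's. The paper argues by direct determinant manipulation: setting $c_\imath=r_1-r_\imath$ and using $c_2+c_3=0$, every entry of $H(r_1)$ takes the form $-c_2^2\,t_{n-\jmath-2}+t_{n-\jmath}$ with the $t_\imath$ elementary symmetric functions of $c_4,\ldots,c_n$; iteratively adding $c_2^2$ times column $\imath$ to column $\imath-1$ (for $\imath=n-2,\ldots,2$) then annihilates the entire first column, so the determinant vanishes. You instead identify $H(r_k)$, up to sign, with the Sylvester resultant of the even and odd parts $E_k,O_k$ of $g_k(t)=f(r_k+t)/t$, and exhibit the common root $u=\alpha^2$ with $\alpha=(r_j-r_i)/2$ coming from $g_k(\pm\alpha)=0$, treating the degenerate case $\alpha=0$ (where $r_i=r_j=r_k$ forces a double root of $g_k$ at the origin) separately. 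This identification is legitimate and non-circular: it is exactly the specialization at $x=r_k$ of the identity $\res(g_1^*,g_3^*,z)=\pm\,H$ that the paper itself establishes only later, in Lemma~\ref{lem:resH}, by a Taylor-expansion computation independent of the present lemma. The bookkeeping you flag as the main risk does close: the $(n-2)$nd leading principal minor contains $\lceil(n-2)/2\rceil$ rows of $O_k$-coefficients and $\lfloor(n-2)/2\rfloor$ rows of $E_k$-coefficients, which matches the Sylvester matrix for formal degrees summing to $n-2$ (one of the two polynomials being monic), and the only direction you need, common root $\Rightarrow$ singularity, is insensitive to possible degree drop, since all rows are coefficient vectors of polynomials vanishing at $u=\alpha^2$ and hence lie in a common hyperplane. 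The trade-off: the paper's proof is elementary and self-contained but sheds no light on why $M$ has its shape; yours is more conceptual, explains the interleaved even/odd derivative pattern that the paper explicitly calls puzzling, and handles all cases uniformly, at the price of setting up the resultant interpretation that the paper defers to Section~\ref{sec:ideals}.
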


\begin{proof}
It suffices to show that the lemma holds for $k=1$, $i=2$, and $j=3$.
Denote by $\Omega_{l}^{\gamma}$ the set of all $\gamma$-tuples obtained from $(l,\ldots,n)$ by deleting $n-\gamma$ components, where $l$ is a positive integer not greater than $n$.
Let $b_\mu =x-r_\mu $ for $\mu =1,\ldots,n$. By calculus, it is easy to verify that
\begin{equation}\label{eq:Fkr}
\frac{f^{(\nu )}}{\nu \,!}=\left\{
\begin{array}{cl}
\sum_{(\mu _1,\ldots,\mu _{n-\nu })\in\Omega_1^{n-\nu }}{b_{\mu _1}\cdots b_{\mu _{n-\nu }}},&\nu =0,\ldots, n-1;\\[16pt]
1,&\nu =n.
\end{array}
\right.
\end{equation}

Let $c_\mu =r_1-r_\mu $ for $\mu =2,\ldots,n$ and suppose that $r_1=(r_2+r_3)/2$. Then $c_2+c_3=0$. Substituting $x=r_1$ into \eqref{eq:Fkr} and observing that any term ${b_{\mu _1}\cdots b_{\mu _{n-\nu }}}$ involving $x-r_1$ vanishes at $x=r_1$, we have
\begin{align*}
\frac{f^{(\nu )}}{\nu \,!}\Big|_{x=r_1}&=\sum\limits_{(\mu _1,\ldots,\mu _{n-\nu })\in\Omega_1^{n-\nu }}{b_{\mu _1}\cdots b_{\mu _{n-\nu }}}\Big|_{x=r_1}\\[10pt]
&=t_{n-\nu }+c_2c_3t_{n-\nu -2}+c_2t_{n-\nu -1}+c_3t_{n-\nu -1}\\[10pt]
&=c_2c_3t_{n-\nu -2}+(c_2+c_3)t_{n-\nu -1}+t_{n-\nu }\\[10pt]
&=-c_2^2t_{n-\nu -2}+t_{n-\nu },
\end{align*}
where
\[t_{n-\nu }=\left\{
\begin{array}{cl}
\sum\limits_{(\mu _1,\ldots,\mu _{n-\nu })\in\Omega_4^{n-\nu }}{c_{\mu _1}\cdots c_{\mu _{n-\nu }}}&\mbox{if}\,~3\leq \nu \leq n-1;\\[10pt]
1&\mbox{if}\,~\nu =n;\\[10pt]
0&\mbox{if}\,~\nu \leq 2\mbox{~or~} \nu \geq n+1.
\end{array}
\right.
\]

Substitution of $t_{n-\nu }$ into $H(r_1)$ yields
\[\begin{array}{l}
H(r_1)=\left|
\begin{array}{cccccc}\smallskip
-c_2^2t_{n-4} & -c_2^2t_{n-6}+t_{n-4} &  -c_2^2t_{n-8}+t_{n-6} &  \cdots \!\!&\! \cdots & -c_2^2t_{n-2\nu -2}+t_{n-2\nu }  \\ [5pt]
-c_2^2t_{n-3} &  -c_2^2t_{n-5}+t_{n-3} & -c_2^2t_{n-7}+t_{n-5} &  \cdots \!\!&\! \cdots & -c_2^2t_{n-2\nu -1}+t_{n-2\nu +1} \\ [5pt]
0 & -c_2^2t_{n-4} &  -c_2^2t_{n-6}+t_{n-4} &  \cdots \!\!&\! \cdots  &  -c_2^2t_{n-2\nu }+t_{n-2\nu +2} \\ [5pt]
0 & -c_2^2t_{n-3} &  -c_2^2t_{n-5}+t_{n-3} &  \cdots \!\!&\! \cdots  &  -c_2^2t_{n-2\nu +1}+t_{n-2\nu +3} \\ [5pt]
0 & 0 & -c_2^2t_{n-4}+t_{n-2} & \cdots \!\!&\!  \cdots & -c_2^2t_{n-2\nu +2}+t_{n-2\nu +4} \\ [5pt]
\vdots &\vdots &  \vdots & \ddots \!\!\!&\!\!\! &\vdots \\[-11pt]
\vdots &\vdots &  \vdots &  \!\!\!&\!\!\! \hspace{3pt}\raisebox{0.12cm}{\mbox{$\ddots$}} &\vdots \\[5pt]
0 & 0 & 0 & \cdots  \!\!&\! \cdots  & 0 \\ [5pt]
0 & 0 & 0 & \cdots  \!\!&\! \cdots  & 0 \\ [5pt]
0 & 0 & 0 & \cdots  \!\!&\! \cdots  & 0 \\ [5pt]
0 & 0 & 0 & \cdots  \!\!&\! \cdots  & 0
\end{array}
\right.
\end{array}
\]

\[
\qquad\qquad\qquad\quad\qquad\qquad
\begin{array}{l}
\left.
\begin{array}{cccccccc}
\smallskip
\cdots \!&\! \cdots & 0 & \cdots \!&\! \cdots & 0 & 0 & 0\\ [5pt]
\cdots \!&\! \cdots & 0 & \cdots \!&\! \cdots & 0 & 0 & 0\\ [5pt]
\cdots \!&\! \cdots & 0 & \cdots \!&\! \cdots & 0 & 0 & 0\\ [5pt]
\cdots \!&\! \cdots & 0 & \cdots \!&\! \cdots & 0 & 0 & 0 \\ [5pt]
\cdots \!&\! \cdots & 0 & \cdots \!&\! \cdots & 0 & 0 & 0 \\ [5pt]
\ddots \!&\! & \vdots &\ddots \!&\! & \vdots & \vdots & \vdots \\[-10pt]
\smallskip
\!&\! \hspace{-4pt}\raisebox{0.1cm}{\mbox{$\ddots$}} & \vdots & \!&\! \hspace{-4pt}\raisebox{0.1cm}{\mbox{$\ddots$}} & \vdots & \vdots & \vdots \\[5pt]
\cdots \!&\! \cdots & -c_2^2t_{2\nu -6}+t_{2\nu -4} &\cdots \!&\! \cdots & -c_2^2+t_2 & 1  & 0\\ [5pt]
\cdots \!&\! \cdots & -c_2^2t_{2\nu -5}+t_{2\nu -3} & \cdots \!&\!\cdots & -c_2^2t_1+t_3 & t_1  & 0\\ [5pt]
\cdots \!&\! \cdots & -c_2^2t_{2\nu -4}+t_{2\nu -2} & \cdots \!&\!\cdots & -c_2^2t_2+t_4    & -c_2^2+t_2  & 1\\ [5pt]
\cdots \!&\! \cdots & -c_2^2t_{2\nu -3}+t_{2\nu -1} & \cdots \!&\!\cdots & -c_2^2t_3+t_5 & -c_2^2t_1+t_3  & t_1
\end{array}
\right|.
\end{array}
\]

For each $\mu =n-2,\ldots,2$, add the $\mu $th column multiplied by $c_2^2$ to the $(\mu -1)$th column of $H(r_1)$ iteratively. It follows that

\[
H(r_1)=\left|
\begin{array}{ccccccccccc}
0 & t_{n-4} & t_{n-6} &  \cdots \!&\!  \cdots & t_{n-2\nu }  & \cdots \!&\! \cdots & 0 & 0\\ [5pt]
0 & t_{n-3} & t_{n-5} &  \cdots \!&\!  \cdots & t_{n-2\nu +1}  & \cdots \!&\! \cdots& 0 & 0\\ [5pt]
0 & 0 &  t_{n-4} &  \cdots \!&\! \cdots & t_{n-2\nu +2} & \cdots \!&\! \cdots& 0 & 0\\ [5pt]
0 & 0 &  t_{n-3} &  \cdots \!\!\!&\!\!\! \cdots & t_{n-2\nu +3} & \cdots \!\!\!&\!\!\! \cdots& 0 & 0\\ [5pt]
\vdots &\vdots & \vdots & \ddots \!&\! & \vdots &\ddots \!&\!& \vdots & \vdots\\ [-11pt]
\vdots &\vdots & \vdots & \!&\!\hspace{-5pt}\raisebox{0.1cm}{\mbox{$\ddots$}} & \vdots &\!&\!\hspace{-4pt}\raisebox{0.1cm}{\mbox{$\ddots$}} & \vdots & \vdots\\
0 & 0 & 0  & \cdots \!&\! \cdots & \vdots & \cdots \!&\!\cdots & 1  & 0\\ [5pt]
0 & 0 & 0  & \cdots \!&\! \cdots & \vdots & \cdots \!&\!\cdots & t_1  & 0\\ [5pt]
0 & 0 & 0  & \cdots \!&\! \cdots & \vdots & \cdots \!&\!\cdots & t_2  & 1\\ [5pt]
0 & 0 & 0  & \cdots \!&\! \cdots & \vdots & \cdots \!&\!\cdots & t_3  & t_1
\end{array}
\right|=0.
\]

\vspace{-0.5cm}
\end{proof}

\begin{lemma}\label{lem:division}
For any $i< j\neq k$, the linear form $2\,r_k-r_i-r_j$ divides $H(r_k)$.
\end{lemma}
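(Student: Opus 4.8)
The plan is to upgrade the \emph{vanishing} statement of Lemma~\ref{lem:rootH} into a \emph{divisibility} statement by a single application of the remainder theorem in one well-chosen variable. Throughout I treat the roots $r_1,\ldots,r_n$ as independent indeterminates. By \eqref{eq:Fkr} each entry $f^{(\jmath)}/\jmath!$ of the minor $H$, after the specialization $x=r_k$, becomes a polynomial in the $r_m$ (indeed an elementary symmetric polynomial in the differences $r_k-r_m$, $m\neq k$); hence $H(r_k)$ is a genuine element of $\mathbb{C}[r_1,\ldots,r_n]$, and the goal is to show that the linear form $p:=2\,r_k-r_i-r_j$ divides it.

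Fix a triple with $i<j\neq k$ and single out the variable $r_i$. Writing $R'=\mathbb{C}[r_1,\ldots,\widehat{r_i},\ldots,r_n]$ for the polynomial ring in the remaining roots, we have $\mathbb{C}[r_1,\ldots,r_n]=R'[r_i]$, and as an element of $R'[r_i]$ the form $p=-\,r_i+(2\,r_k-r_j)$ has degree one with leading coefficient the unit $-1$. Division of $H(r_k)$ by $p$ inside $R'[r_i]$ is therefore legitimate and leaves a remainder of degree zero in $r_i$, which by the remainder theorem is simply the result of the substitution $r_i=2\,r_k-r_j$:
\[
H(r_k)=q\cdot\big(2\,r_k-r_i-r_j\big)+\big(H(r_k)\big)\big|_{r_i=2\,r_k-r_j},\qquad q\in R'[r_i].
\]
So everything reduces to showing that this remainder is the zero polynomial.

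But the substitution $r_i=2\,r_k-r_j$ is exactly the relation $r_k=(r_i+r_j)/2$, i.e.\ it forces $(r_i,r_k,r_j)$ to be a symmetric triple, and under precisely this relation Lemma~\ref{lem:rootH} gives $H(r_k)=0$. Crucially, that lemma is established by a purely formal row-and-column reduction of the determinant, so its conclusion holds as an \emph{identity} in the remaining free variables, not merely at isolated points; the case $k=1$, $i=2$, $j=3$ is treated there verbatim, and the general case follows by relabeling, since $H(r_k)$ is symmetric in the roots $r_m$ with $m\neq k$. Hence the remainder vanishes identically and $p\mid H(r_k)$ in $\mathbb{C}[r_1,\ldots,r_n]$, as claimed.

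The only point that requires care is this very passage from ``$H(r_k)$ vanishes whenever $2\,r_k-r_i-r_j=0$'' to outright divisibility; isolating the variable $r_i$, whose coefficient in $p$ is a unit, is what lets the remainder theorem do the work directly inside the polynomial ring, with no appeal to the Nullstellensatz or to unique factorization. (The coefficients $-1,-1,2$ of $r_i,r_j,r_k$ in $p$ are all units of $\mathbb{C}$, so singling out any one of the three roots would serve equally well; $r_i$ is chosen merely for tidiness.) The accompanying bookkeeping to verify --- that Lemma~\ref{lem:rootH} is genuinely a polynomial identity, and that relabeling is a legitimate symmetry of $H(r_k)$ --- is routine and is exactly what the structure of \eqref{eq:Fkr} supplies.
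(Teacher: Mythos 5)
Your proposal is correct and takes essentially the same route as the paper: both arguments invoke Lemma~\ref{lem:rootH} for the vanishing of $H(r_k)$ under the relation $2\,r_k-r_i-r_j=0$ and then upgrade this to divisibility via the factor/remainder theorem, the paper implicitly performing the division with respect to $r_k$ while you perform it with respect to $r_i$. Your explicit observations --- that Lemma~\ref{lem:rootH} holds as a polynomial identity in the remaining free roots because its proof is a formal determinant reduction, and that relabeling is justified by the symmetry of $H(r_k)$ in the roots $r_m$ with $m\neq k$ --- are details the paper leaves tacit, not a different method.
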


\begin{proof}
Let $r_k=(r_i+r_j)/2$, where $i$ and $j$ are arbitrary but fixed. By Lemma~\ref{lem:rootH},
$$H(r_k)=0.$$
It follows that
\[[r_k-(r_i+r_j)/2]\mid H(r_k), \mbox{~~~or~~~} (2\,r_k-r_i-r_j)\mid H(r_k)\]
over ${\Bbb Q}$ (the field of rational numbers).
\end{proof}

Note that $u\mid v$ stands for ``$u$ divides $v$'' as usual.
Let $c_\mu =r_1-r_\mu $ for $\mu =2,\ldots,n$. It is easy to verify that
\begin{equation}
\frac{f^{(\nu )}}{\nu \,!}\Bigg|_{x=r_1}=t^*_{n-\nu },
\end{equation}
where
\[
t^*_{n-\nu }=\left\{
\begin{array}{cl}
\sum\limits_{(\mu _1,\ldots,\mu _{n-\nu })\in\Omega_2^{n-\nu }}{c_{\mu _1}\cdots c_{\mu _{n-\nu }}}&\mbox{if}\,~1\leq \nu \leq n-1;\\[12pt]
1&\mbox{if}\,~\nu =n.
\end{array}
\right.
\]
Let $M(r_1)$ be the matrix obtained from $M$ in \eqref{eq:D2} by replacing $x$ with $r_1$. Then
\begin{equation*}
M(r_1)=\left(
\begin{array}{cccccccc}
t^*_{n-2} &  t^*_{n-4} &  t^*_{n-6} & \cdots \!&\! \cdots & t^*_{n-2\nu }  & \cdots \!&\! \cdots\\ [5pt]
t^*_{n-1} &  t^*_{n-3} &  t^*_{n-5} & \cdots \!&\! \cdots & t^*_{n-2\nu +1}  & \cdots \!&\! \cdots\\ [5pt]
0 & t^*_{n-2} &  t^*_{n-4}  & \cdots \!&\! \cdots & t^*_{n-2\nu +2} & \cdots \!&\! \cdots \\ [5pt]
0 & t^*_{n-1} &  t^*_{n-3}  & \cdots \!&\! \cdots & t^*_{n-2\nu +3}  & \cdots \!&\! \cdots\\ [5pt]
\vdots & \vdots & \vdots &\ddots \!&\!  &\vdots & \ddots \!&\!\\[-11pt]
\vdots & \vdots & \vdots & \!&\! \hspace{-5pt}\raisebox{0.1cm}{\mbox{$\ddots$}} &\vdots &\!&\!\hspace{-5pt}\raisebox{0.1cm}{\mbox{$\ddots$}}
\end{array}
\right).
\end{equation*}
Since $C_{n-1}^{n-\nu }r_1^{n-\nu }$ is the leading coefficient of $t^*_{n-\nu }$ with respect to $r_1$,
$t^*_{n-\nu }$ can be written as
\[
t^*_{n-\nu }=C_{n-1}^{n-\nu }r_1^{n-\nu }+\mathcal{O}(r_1^{n-\nu }),\]
where $\mathcal{O}(r_1^{n-\nu })$ denotes terms of degree less than $n-\nu $ in $r_1$.
Now let $M_n(r_1)$ be the $(n-2)$th leading principal minor of the matrix obtained from $M(r_1)$ by replacing each entry $t^*_{n-\nu }$ with $C_{n-1}^{n-\nu }r_1^{n-\nu }$. Then
\[
M_n(r_1)=\Big|
\pt{m}_1(r_1),\ldots,\pt{m}_\mu (r_1),\ldots,\pt{m}_{n-2}(r_1)
\Big|,
\]
where
\[
\pt{m}_\mu (r_1)=\left\{
\begin{array}{l}
\Big(C_{n-1}^{n-2\mu }r^{n-2\mu }_1,\ldots,C_{n-1}^{n-2\mu +\nu }r^{n-2\mu +\nu }_1,\ldots, \underbrace{0,\ldots\ldots\ldots\ldots,0}_{\max(n-2\mu -2,0)\mbox{~terms}}\Big)^T\\[25pt]
\qquad\qquad\qquad\qquad\mbox{if}\,~\mu \leq \left\lceil \dfrac{n-2}{2}\right\rceil\mbox{~and~}0\leq \nu \leq \min(n-3,2\,\mu -1);\\[25pt]
\Big(\underbrace{0,\ldots\ldots\ldots,0}_{2\mu -n\mbox{~terms}},C_{n-1}^{0}r_1^0,\ldots,C_{n-1}^{\nu }r_1^\nu ,\ldots\Big)^T\qquad\\[25pt]
\qquad\qquad\qquad\qquad\mbox{if}\,~\mu > \left\lceil \dfrac{n-2}{2}\right\rceil\mbox{~and~}0\leq \nu \leq 2\,n-2\,\mu -3.
\end{array}
\right.
\]
Therefore, $M_n(r_1)$ has the following form:
\[\begin{array}{l}\medskip
M_n(r_1)=\left|
\begin{array}{cccccccc}\smallskip
C_{n-1}^{n-2}r_1^{n-2} &  C_{n-1}^{n-4}r_1^{n-4} &  C_{n-1}^{n-6}r_1^{n-6} & \cdots \!&\! \cdots &  C_{n-1}^{n-2\nu }r_1^{n-2\nu } & \cdots \!&\! \cdots \\ [8pt]
C_{n-1}^{n-1}r_1^{n-1} &  C_{n-1}^{n-3}r_1^{n-3} &  C_{n-1}^{n-5}r_1^{n-5} & \cdots \!&\! \cdots &  C_{n-1}^{n-2\nu +1}r_1^{n-2\nu +1} & \cdots \!&\! \cdots \\ [8pt]
0 & C_{n-1}^{n-2}r_1^{n-2} &  C_{n-1}^{n-4}r_1^{n-4} &  \cdots \!&\!  \cdots &  C_{n-1}^{n-2\nu +2}r_1^{n-2\nu +2} & \cdots \!&\! \cdots \\ [8pt]
0 & C_{n-1}^{n-1}r_1^{n-1} &  C_{n-1}^{n-3}r_1^{n-3} &  \cdots \!&\!  \cdots &  C_{n-1}^{n-2\nu +3}r_1^{n-2\nu +3} & \cdots \!&\! \cdots \\ [8pt]
0 & 0 & C_{n-1}^{n-2}r_1^{n-2} &\cdots \!&\!  \cdots &  C_{n-1}^{n-2\nu +4}r_1^{n-2\nu +4}   \\ [8pt]
\vdots & \vdots & \vdots &\ddots  \!&\!   &  \vdots &\ddots  \!&\!     \\ [-11pt]
\vdots & \vdots & \vdots &  \!&\!  \hspace{-6pt}\raisebox{0.1cm}{\mbox{$\ddots$}} &  \vdots &  \!&\!  \hspace{-4pt}\raisebox{0.1cm}{\mbox{$\ddots$}}
\end{array}
\right|.
\end{array}
\]
Apparently, the above expression for $M_n(r_1)$ remains valid when $r_1$ is substituted by $r_k$ for any $k>1$.

\begin{lemma}\label{lem:initerm}
$M_n(r_k)=(2\,r_k)^{\frac{(n-1)(n-2)}{2}}$ for $k=1,\ldots,n$.
\end{lemma}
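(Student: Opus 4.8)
The plan is to exploit that every entry of $M_n(r_k)$ is a single monomial in $r_k$. Using $C_{n-1}^{m}=C_{n-1}^{\,n-1-m}$, the $(a,b)$-entry can be written as $C_{n-1}^{\,2b-a}\,r_k^{\,n-2b+a-1}$ (zero when $2b-a\notin\{0,\dots,n-1\}$). In the Leibniz expansion of the determinant, the $r_k$-exponent contributed by a permutation $\sigma$ is $\sum_{a=1}^{n-2}(n-2\sigma(a)+a-1)=\tfrac{(n-1)(n-2)}{2}$, which is independent of $\sigma$ because $\sum_a\sigma(a)=\sum_a a$. Hence $M_n(r_k)$ is homogeneous of that degree, and factoring $r_k$ out of the rows and columns yields $M_n(r_k)=\det(B)\,r_k^{(n-1)(n-2)/2}$, where $B$ is the constant $(n-2)\times(n-2)$ matrix with $B_{a,b}=C_{n-1}^{\,2b-a}$. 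It therefore remains to prove the purely numerical identity $\det B=2^{(n-1)(n-2)/2}$.

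To evaluate $\det B$ I would set up a recurrence in $n$. Applying Pascal's rule $C_{n-1}^{k}=C_{n-2}^{k}+C_{n-2}^{k-1}$ to every entry splits each column of $B$ into a sum of two coefficient-windows of $(1+t)^{n-2}$; suitable elementary column operations (subtracting each column's shifted neighbour) collapse these windows, peeling off one factor of $2$ from each of the $n-2$ columns and leaving the analogous matrix for $n-1$. This should give $\det B^{(n)}=2^{\,n-2}\det B^{(n-1)}$, and with the base case $\det B^{(3)}=C_2^{1}=2$ the product telescopes to $\det B^{(n)}=\prod_{k=3}^{n}2^{\,k-2}=2^{(n-1)(n-2)/2}$, sign included. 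Substituting back gives $M_n(r_k)=(2\,r_k)^{(n-1)(n-2)/2}$, as claimed.

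An alternative that avoids induction is to recognise $B$ as the Sylvester matrix of the even and odd parts $E(s)=\sum_i C_{n-1}^{2i}s^i$ and $O(s)=\sum_i C_{n-1}^{2i+1}s^i$ of $(1+t)^{n-1}=E(t^2)+t\,O(t^2)$, so that $\det B=\res(E,O,s)$. The identity $E(s)^2-s\,O(s)^2=(1-s)^{n-1}$, obtained from $(1+t)^{n-1}(1-t)^{n-1}=(1-t^2)^{n-1}$, together with $E(1)=O(1)=2^{n-2}$, forces $(\det B)^2=2^{(n-1)(n-2)}$ via the product formula for resultants. Either way, the one genuinely delicate point — and the main obstacle — is pinning down the constant exactly: in the inductive route, tracking how the boundary rows and the even/odd shift interact so that precisely one factor of $2$ and no stray sign is extracted per column; in the resultant route, upgrading $(\det B)^2=2^{(n-1)(n-2)}$ to the signed value $+2^{(n-1)(n-2)/2}$, which I would settle from the base case together with the positivity of $\det B$ as a nonnegative lattice-path (LGV-type) binomial determinant. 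This leading-term computation is exactly what combines with Lemmas~\ref{lem:rootH} and~\ref{lem:division} to yield Theorem~\ref{thm:D2}.
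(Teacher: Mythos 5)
Your main argument is correct and is essentially the paper's own proof. After applying Pascal's rule to the entries, the paper performs exactly the elementary operations you sketch --- columns ${\rm co}_\imath \to {\rm co}_\imath + r_k^2\,{\rm co}_{\imath+1}$ for $\imath=n-3,\ldots,1$, then rows ${\rm ro}_\imath \to {\rm ro}_\imath - r_k\,{\rm ro}_{\imath-1}$ for $\imath=2,\ldots,n-2$ --- to obtain the recurrence $M_n(r_k)=(2\,r_k)^{n-2}M_{n-1}(r_k)$ and telescopes from the base case $M_3(r_k)=2\,r_k$; your recurrence $\det B^{(n)}=2^{n-2}\det B^{(n-1)}$ with $\det B^{(3)}=2$ is the identical computation, your Leibniz-homogeneity observation merely stripping off the powers of $r_k$ that the paper carries along (a tidy and correct normalization, but not a different method). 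Your alternative resultant route is genuinely different from the paper's proof of this lemma, and it resonates with techniques the paper uses elsewhere: Lemma~\ref{lem:resH} identifies $H$ itself as $\pm\,\res(g_1^*,g_3^*,z)$, a resultant of even/odd parts, and the proof of Lemma~\ref{lem:degree_specialH} uses precisely the kind of roots-of-unity factorization you invoke via $(1+t)^{n-1}(1-t)^{n-1}=(1-t^2)^{n-1}$. That route would work, but as you yourself note it only yields $(\det B)^2=2^{(n-1)(n-2)}$ directly, and closing it requires a parity case-split on the leading coefficients and degrees of $E$ and $O$, bookkeeping of the sign coming from interleaving the Sylvester rows, and a separate positivity argument (your LGV/lattice-path appeal is a legitimate way to do this) --- overhead that the inductive route, which is the one the paper adopts, avoids entirely.
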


\begin{proof}
We prove the lemma for $k=1$. The proof applies for any $k\neq 1$. Substitution of $C_n^\mu =C_{n-1}^{\mu }+C_{n-1}^{\mu -1}$ into $\pt{m}_\mu (r_k)$ yields
\[
\pt{m}_\mu (r_k)=\left\{
\begin{array}{l}
\Big((C_{n-2}^{n-2\mu }+C_{n-2}^{n-2\mu -1})r_k^{n-2\mu },\ldots,(C_{n-2}^{n-2\mu +\nu }+C_{n-2}^{n-2\mu +\nu -1})r_k^{n-2\mu +\nu },\\ [12pt]
\qquad\qquad\qquad\qquad\qquad\qquad\qquad\qquad\qquad\quad \ldots, \underbrace{0,\ldots\ldots\ldots\ldots,0}_{\max(n-2\mu -2,0)\mbox{~terms}}\Big)^T\\[25pt]
\qquad\qquad\qquad\mbox{if}\,~\mu \leq \left\lceil \dfrac{n-2}{2}\right\rceil\mbox{~and~}0\leq \nu \leq \min(n-3,2\,\mu -1);\\[10pt]
\Big(\underbrace{0,\ldots\ldots\ldots,0}_{2\mu -n\mbox{~terms}},C_{n-2}^{0}r_k^0,\ldots,(C_{n-2}^{\nu }+C_{n-2}^{\nu -1})r_k^\nu ,\ldots\Big)^T
\qquad\qquad\qquad\qquad\qquad\qquad\qquad\quad\\[25pt]
\qquad\qquad\qquad\qquad\mbox{if}\,~\mu > \left\lceil \dfrac{n-2}{2}\right\rceil\mbox{~and~}0\leq \nu \leq 2\,n-2\,\mu -3,
\end{array}
\right.
\]
where $C_{n-2}^\nu =0$ for $\nu <0$ and $\nu >n-2$, and $\lceil \gamma \rceil$ denotes the smallest integer that is not less than the rational number $\gamma$. For any positive integer $l$, denote by ${\rm co}_l$ the $l$th column and by ${\rm ro}_l$ the $l$th row of this matrix. Then
\begin{align*}
 M_n(r_k) & \xlongequal[\mu =n-3,\ldots,1]{{\rm co}_\mu +{\rm co}_{\mu +1}\cdot r_k^2}\\\\ &\left|
\begin{array}{cccccccc}
\sum\limits_{\mu =0}^{n-2}C_{n-2}^{\mu }r_k^{n-2} & \sum\limits_{\mu =0}^{n-4}C_{n-2}^{\mu }r_k^{n-4} & \cdots \!&\! \cdots & \sum\limits_{\mu =1}^{n-2\nu }C_{n-2}^{\mu }r_k^{n-2\nu } & \cdots \!&\! \cdots\\[12pt]
\sum\limits_{\mu =0}^{n-2}C_{n-2}^{\mu }r_k^{n-1} & \sum\limits_{\mu =0}^{n-3}C_{n-2}^{\mu }r_k^{n-3} & \cdots \!&\! \cdots & \sum\limits_{\mu =1}^{n-2\nu +1}C_{n-2}^{\mu }r_k^{n-2\nu +1} & \cdots \!&\! \cdots\\[12pt]
\sum\limits_{\mu =0}^{n-2}C_{n-2}^{\mu }r_k^{n} &\sum\limits_{\mu =0}^{n-2}C_{n-2}^{\mu }r_k^{n-2} & \cdots \!&\! \cdots &\sum\limits_{\mu =1}^{n-2\nu +2}C_{n-2}^{\mu }r_k^{n-2\nu +2} & \cdots \!&\! \cdots\\[12pt]
\sum\limits_{\mu =0}^{n-2}C_{n-2}^{\mu }r_k^{n+1} & \sum\limits_{\mu =0}^{n-2}C_{n-2}^{\mu }r_k^{n-1} & \cdots \!&\! \cdots
&\sum\limits_{\mu =1}^{n-2\nu +3}C_{n-2}^{\mu }r_k^{n-2\nu +3} &\cdots \!&\! \cdots\\
\vdots & \vdots & \ddots \!&\! & \vdots &\ddots \!&\!\\[-11pt]
\vdots & \vdots & \!&\!\hspace{-5pt}\raisebox{0.1cm}{\mbox{$\ddots$}} & \vdots &\!&\!\hspace{-5pt}\raisebox{0.1cm}{\mbox{$\ddots$}}
\end{array}
\right|
\end{align*}
\begin{align*}
\xlongequal[\mu =n-2,\ldots,2]{{\rm ro}_{\mu }-{\rm ro}_{\mu -1}\cdot r_k}
&\left|
\begin{array}{cccccccc}
\sum\limits_{\mu =0}^{n-2}C_{n-2}^{\mu }r_k^{n-2} & \sum\limits_{\mu =0}^{n-4}C_{n-2}^{\mu }r_k^{n-4} & \sum\limits_{\mu =0}^{n-6}C_{n-2}^{n-6}r_k^{n-6} & \cdots \!&\! \cdots & \sum\limits_{\mu =0}^{n-2\nu }C_{n-2}^{n-2\nu }r_k^{n-2\nu } & \cdots \!&\! \cdots\\[12pt]
0 & C_{n-2}^{n-3}r_k^{n-3} & C_{n-2}^{n-5}r_k^{n-5} & \cdots \!&\! \cdots & C_{n-2}^{n-2\nu +1}r_k^{n-2\nu +1} & \cdots \!&\! \cdots\\[12pt]
0 &C_{n-2}^{n-2}r_k^{n-2} & C_{n-2}^{n-4}r_k^{n-4} & \cdots \!&\! \cdots & C_{n-2}^{n-2\nu +2}r_k^{n-2\nu +2} & \cdots \!&\! \cdots\\[12pt]
0 &0 & C_{n-2}^{n-3}r_k^{n-3} & \cdots \!&\! \cdots & C_{n-2}^{n-2\nu +3}r_k^{n-2\nu +3}  & \cdots \!&\! \cdots\\[12pt]
0 & 0 & C_{n-2}^{n-2}r_k^{n-2} & \cdots \!&\! \cdots & C_{n-2}^{n-2\nu +4}r_k^{n-2\nu +4} & \cdots \!&\! \cdots\\
\vdots & \vdots & \vdots & \ddots \!&\! & \vdots& \ddots \!&\! \\[-11pt]
\vdots & \vdots & \vdots & \!&\! \hspace{-4pt}\raisebox{0.1cm}{\mbox{$\ddots$}} &\vdots & \!&\!\hspace{-4pt}\raisebox{0.1cm}{\mbox{$\ddots$}}
\end{array}
\right|\\ \\
=&\sum_{\mu =0}^{n-2}C_{n-2}^{\mu }r_k^{n-2}\cdot M_{n-1}(r_k)=(2\,r_k)^{n-2}\cdot M_{n-1}(r_k).
\end{align*}

Since $M_3(r_k)=2\,r_k$, it is easy to verify that
\[M_n(r_k)=(2\,r_k)^{\frac{(n-1)(n-2)}{2}}\]
by induction.
\end{proof}

\begin{proof}[Proof of Theorem 2]
By Lemma \ref{lem:division}, \[\left[\prod\nolimits_{i< j\neq k}^n{(2\,r_k-r_i-r_j)}\right]~\bigg|~
H(r_k).\]
Hence there exists a polynomial $P=P(r_1,\ldots,r_n)$ such that
\[
H(r_k)=P\,\prod_{i< j\neq k}^n{(2\,r_k-r_i-r_j)}.
\]
Observe that both of the leading terms of $\prod_{i< j\neq k}^n{(2\,r_k-r_i-r_j)}$ and $H$ with respect to $r_k$ is equal to $(2\,r_k)^{\frac{(n-1)(n-2)}{2}}$. This implies that $P=1$, so that
\[H(r_k)=\prod_{i< j\neq k}^n{(2\,r_k-r_i-r_j)}.\]
Therefore,
\[\res(f, H, x)=\prod_{k=1}^nH(r_k)=\prod_{i< j\neq k}(2\,r_k-r_i-r_j)=D_2.\]

\vspace{-0.5cm}
\end{proof}

%====================================================================================
\section{Irreducibility and Degree of the Second Discriminant}
\label{sec:Properties}
%====================================================================================

Using Theorem \ref{thm:D2}, one can easily verify that
\begin{enumerate}[(1)]
\item for $n=3$, $D_2=-2\,a_2^3+9\,a_1a_2-27\,a_0$;
\item for $n=4$, $D_2$ is an irreducible polynomial of total degree $9$, and more explicitly:
\begin{align*}
    \!\!\!\!\!\!\!\!D_2\,=\,&216\,a_0a_3^8-72\,a_1a_2a_3^7+16\,a_2^3a_3^6-2304\,a_0a_2a_3^6+72\,a_1^2a_3^6+672\,a_1a_2^2a_3^5-144\,a_2^4
a_3^4\\
&+5310\,a_0a_1a_3^5+7446\,a_0a_2
^2a_3^4-2346\,a_1^2a_2a_3^4-1278\,a_1
a_2^3a_3^3+324\,a_2^5a_3^2\\
&-9675\,a_
0^2a_3^4\!-\!28950\,a_0a_1a_2a_3^3-6804
\,a_0a_2^3a_3^2+1658\,a_1^3a_3^3+
9423\,a_1^2a_2^2a_3^2\!-\!1296\,a_1a_2^
4a_3\\
&+\!51600\,a_0^2a_2a_3^2\!+\!31890\,a_0a_
1^2a_3^2\!+\!19440\,a_0a_1a_2^2a_3\!+\!1296\,
a_0a_2^4\!-\!17262\,a_1^3a_2a_3\!+\!972\,a_1
^2a_2^3\\
&-120000\,a_0^2a_1a_3-28800\,a_0
^2a_2^2-5040\,a_0a_1^2a_2+9261\,a_1^4
+160000\,a_0^3;
\end{align*}
\item for $n=5$, $D_2$ is an irreducible polynomial of total degree $18$, consisting of 521 terms, in $a_0,\ldots,a_4$.
\end{enumerate}

In what follows, we prove that $D_2$ is an irreducible polynomial of total degree $3\,(n-1)(n-2)/2$ in $a_0,\ldots,a_{n-1}$ for any $n\geq3$. For this purpose, let $s_i=\sum r_{k_1}\cdots r_{k_i}$ be the sum of all the possible, distinct products of $i$ elements taken from $r_1,\ldots, r_n$ for $i=1,\ldots,n$. The sum $s_i$ of products is called the {\em elementary symmetric polynomial} of degree $i$ in $r_1,\ldots, r_n$. It is easy to show that the Vieta formula $a_{n-i}=(-1)^is_i$ holds for $i=1,\ldots,n$.

\begin{proposition}\label{prop:irreducibility}
Let $\bm{a}=(a_0, a_1,\ldots,a_{n-1})$ be the coefficients and $D_2$ be the second discriminant of a monic univariate polynomial $f$.
Then $D_2(a_0,\ldots,a_{n-1})\in\mathbb{Q}[a_0,\ldots,a_{n-1}]$ is irreducible over $\mathbb{Q}$.
\end{proposition}

\begin{proof}
Let $P\in \mathbb{Q}[a_0,\ldots,a_{n-1}]$ be a nonconstant irreducible polynomial and suppose that $P\mid D_2$. We show that $D_2\mid P$.

Substituting Vieta's formula $a_{n-i}=(-1)^i\sum r_{k_1}\cdots
r_{k_i}$ into $P$ and $D_2$, we obtain two symmetric polynomials $\bar{P}$ and $\bar{D}_2$ in $\mathbb{Q}[r_1,\ldots,r_n]$, respectively. Then $\bar{P}=0$ is equivalent to $P=0$, and so is $\bar{D}_2$ to $D_2$. Since $P$ is nonconstant, so is $\bar{P}$. As $P\mid D_2$, $\bar{P}\mid \bar{D}_2$; so $\bar{P}$ contains at least one irreducible factor of $\bar{D}_2$, say $2\,r_1-r_2-r_3$. Therefore, every $2\,r_k-r_i-r_j$ is a factor of $\bar{P}$ because $\bar{P}$ is symmetric with respect to $r_1,\ldots,r_n$. It follows that $\bar{D}_2\mid \bar{P}$. Hence $\bar{D}_2$ and $\bar{P}$ differ only by a nonzero constant factor, and so do $D_2$ and $P$. Therefore, $D_2\mid P$ and thus $D_2$ is irreducible over $\mathbb{Q}$.
\end{proof}

For simplicity, we often write $\bm{a}$ for $(a_0, a_1,\ldots,a_{n-1})$ and $\deg(F,\bm{a})$ for the total degree of $F$ in $\bm{a}$.

\begin{proposition}\label{prop:degD2}
Let $\bm{a}=(a_0, a_1,\ldots,a_{n-1})$ be the coefficients and $D_2$ be the second discriminant of a monic univariate polynomial $f$ of degree $n$.  Then the total degree $\deg(D_2,\bm{a})$ of $D_2$ in $\bm{a}$ is $3\,(n-1)(n-2)/2$.
\end{proposition}

\begin{proof}
Set $B_0=D_2$. For $i=1,\ldots, n$, let $C_i$ be the homogeneous part of $B_{i-1}$ of the highest total degree in $\bm{a}^+=(a_0,\ldots,a_{n-1},r_1)$ and let $B_i$ be obtained from $C_i$ by substituting Vieta's formula $a_{n-i}=(-1)^i\sum r_{k_1}\cdots r_{k_i}=U_{n-i}r_1+V_{n-i}$, where $U_{n-i}\neq0$ and $\deg(U_{n-i},r_1)=\deg(V_{n-i},r_1)=0$. Then
\begin{align*}
C_i=S_{n-i}a_{n-i}^{N_i}+T_{n-i}&=S_{n-i}(U_{n-i}r_1+V_{n-i})^{N_i}+T_{n-i}=B_i,\\
S_{n-i}U_{n-i}^{N_{i}}r_1^{N_{i}}&=C_{i+1},
\end{align*}
where $N_i=\deg(C_i,a_{n-i})$, $S_{n-i}$ is the leading coefficient of $C_i$ with respect to $a_{n-i}$, and $S_{n-i}U_{n-i}\neq0$.
Therefore, the total degrees of $C_i$, $B_i$, $C_{i+1}$ in $\bm{a}^+$ remain the same for $i=1,\ldots,n$, so $\deg(C_1,\bm{a}^+)=\deg(C_n,\bm{a}^+)$. Note that $C_n$ is the leading term of $D_2$, expressed in terms of the roots $r_1,\ldots,r_n$ as in \eqref{eq:D2}, with respect to $r_1$ and $\deg(C_n, \bm{a}^+)=\deg(C_n, r_1)=3\,(n-1)(n-2)/2$. Thus $\deg(D_2,\bm{a})=\deg(C_1,\bm{a}^+)=3\,(n-1)(n-2)/2$ and the proposition is proved.
\end{proof}

\section{The Second Discriminant with Resultants}\label{sec:resultant}

The following three polynomials will play a significant role in this and later sections:
\begin{align}
& f_1(x,y)=\dfrac{f(y)-f(x)}{y-x},\label{eq:f1}\\[-2pt]
& f_2(x,y)=\dfrac{f\left(\dfrac{x+y}{2}\right)-f(x)}{\dfrac{y-x}{2}},\label{eq:f2}\\[-2pt]
&f_3(x,y)=\dfrac{f(y)-2\,f\left(\dfrac{x+y}{2}\right)+f(x)}{\dfrac{(y-x)^2}{2}}.\label{eq:f3}
\end{align}
The rational functions on the right-hand side of the above equalities can all be simplified to polynomials in $x$ and $y$.

\begin{proposition}\label{prop:resD1D2}
Let $f$ be a univariate polynomial and $f_1,f_2$ be as in \eqref{eq:f1} and \eqref{eq:f2}. Then
$$\res(f, \res(f_1, f_2, y), x)=0 ~~\mbox{if and only if}~~ D_1D_2=0.$$
\end{proposition}

\begin{proof}
($\Longleftarrow$) Let
\[R_1(x)=\res(f_1, f_2, y), \quad R_2=\res(f, R_1, x).\]
We want to show that, if $D_1D_2=0$, then there exist $r_i$ and $r_j$ such that
\begin{equation}\label{eq:F12}
f(r_i)=f(r_j)=0,\quad f_1(r_i,r_j)=0,\quad f_2(r_i, r_j)=0.
\end{equation}
For this purpose, first suppose that $D_1=0$. Then there exist $r_i=r_j, i\neq j$, such that $f(r_i)=f(r_j)=0$ and $f'(r_i)=f'(r_j)=0$ (where $'$ is the derivation operator).
Note that
\[f_1(x,y)=\sum_{k=0}^{n-1}\frac{f^{(k+1)}(x)}{(k+1)!}(y-x)^k,\quad f_2(x,y)=\sum_{k=0}^{n-1}\frac{f^{(k+1)}(x)}{(k+1)!}\left(\frac{y-x}{2}\right)^k.\]
Substitution of $x=r_i$ and $y=r_j$ into the above expressions shows that \eqref{eq:F12} holds in this case.

Now suppose that $D_2=0$ and $D_1\neq0$. Then there exist $r_i\neq r_j$ such that $f(r_i)=f(r_j)=0$ and $f\left(\frac{r_i+r_j}{2}\right)=0$. It follows that
\[f_1(r_i,r_j)=\dfrac{f(r_j)-f(r_i)}{r_j-r_i}=0,\quad f_2(r_i,r_j)=\dfrac{f\left(\dfrac{r_i+r_j}{2}\right)-f(r_i)}{\dfrac{r_j-r_i}{2}}=0.\]
Thus \eqref{eq:F12} holds as well.

In any case, $f_1(r_i,y)$ and $f_2(r_i,y)$ have a common zero $r_j$ for $y$. Therefore,
\[R_1(r_i)=\res(f_1(r_i, y), f_2(r_i,y),y)=0.\]
Hence $f(x)$ and $R_1(x)$ have a common root $r_i$ for $x$. This implies that $R_2=0$.

($\Longrightarrow$) $\res(f, \res(f_1, f_2, y), x)=0$ implies that there exist $r_i$ and $r_j$, $i<j$, such that
\[f(r_i)=0,\quad f_1(r_i, r_j)=f_2(r_i, r_j)=0.\]
Thus $f(r_j)=f_1(r_i,r_j)(r_j-r_i)+f(r_i)=0$, which indicates that $r_j$ is also a root of $f$.

If $r_j=r_i$, then $f(x)$ has a multiple root and thus $D_1=0$.
Otherwise,
\[f_2(r_i,r_j)=2\left[f\left(\frac{r_i+r_j}{2}\right)-f(r_i)\right]\Big/(r_j-r_i)=0\]
implies that $f\left(\frac{r_i+r_j}{2}\right)=0$, so $f$ has three roots, which form a symmetric triple. Therefore $D_2=0$.
\end{proof}

Using similar ideas, we can prove the following proposition, which shows
how to construct $D_2$ via resultant computation twice.

\begin{proposition}\label{prop:resD2}
Let $f$ be a univariate polynomial and $f_1,f_3$ be as in \eqref{eq:f1} and \eqref{eq:f3}. Then
$$\res(f, \res(f_1, f_3, y), x)=0~~ \mbox{if and only if} ~~D_2=0.$$
\end{proposition}

\begin{proof}
($\Longleftarrow$) Let
\[F(x)=\res(f_1, f_3, y),\quad E=\res(f, F, x).\]
We show that, if $D_2=0$, then there exist $r_i$ and $r_j$ such that
\begin{equation}\label{eq:F13}
f(r_i)=f(r_j)=0,\quad f_1(r_i,r_j)=0,\quad f_3(r_i, r_j)=0.
\end{equation}

First suppose that there exist $r_i=r_j=r_k$, $i< j\neq k$, such that $f(r_i)=f(r_j)=f(r_k)=0$. Then $f'(r_i)=f''(r_i)=0$.
Note that
\begin{align*}
f_1(x,y)&=\sum_{k=0}^{n-1}\frac{f^{(k+1)}(x)}{(k+1)!}(y-x)^k,\\
f_3(x,y)&=\dfrac{f_1-f_2}{\dfrac{y-x}{2}}=\dfrac{\sum\limits_{k=0}^{n-1}\dfrac{f^{(k+1)}(x)}{(k+1)!}(y-x)^k-\sum\limits_{k=0}^{n-1}\dfrac{f^{(k+1)}(x)}{(k+1)!}\left(\dfrac{y-x}{2}\right)^k}{\dfrac{y-x}{2}}\\[3pt]
&=\sum_{k=0}^{n-2}\left(2-\dfrac{1}{2^{k}}\right)\dfrac{f^{(k+2)}(x)}{(k+2)!}\left(y-x\right)^k.
\end{align*}
Substitution of $x=r_i$ and $y=r_j$ into the above expressions shows that \eqref{eq:F13} holds in this case.

Suppose otherwise that there exist $r_i\neq r_j$ such that $f(r_i)=f(r_j)=0$ and $f\left({(r_i+r_j)}/{2}\right)=0$. Then it follows from $D_2=0$ that
\[f_1(r_i,r_j)=\dfrac{f(r_j)-f(r_i)}{r_j-r_i}=0,\quad f_3(r_i,r_j)=\dfrac{f(r_j)-2\,f\left(\dfrac{r_i+r_j}{2}\right)+f(r_i)}{\dfrac{(r_j-r_i)^2}{2}}=0,\]
so \eqref{eq:F13} holds as well.

In any case, $f_1(r_i,y)$ and $f_3(r_i,y)$ have a common zero $r_j$ for $y$. Therefore,
\[F(r_i)=\res(f_1(r_i, y), f_3(r_i,y),y)=0.\]
Hence $f(x)$ and $F(x)$ have a common root $r_i$ for $x$. This implies that $E=0$.

($\Longrightarrow$) $\res(f, \res(f_1, f_3, y), x)=0$ implies that there exist $r_i$ and $r_j$, $i< j$, such that
\[f(r_i)=0,\quad f_1(r_i, r_j)=f_3(r_i, r_j)=0.\]
Moreover, $f(r_i)=0$ and $f_1(r_i,r_j)=0$ imply that $f(r_j)=0$.

Consider first the case when $r_i=r_j$. In this case, $D_1=0$ and thus $f'(r_i)=0$. The following calculation shows that $f''(r_i)=0$:
\begin{align*}
f''(r_i)&=~\lim_{x\rightarrow r_i}\dfrac{f'\left(\dfrac{x+r_i}{2}\right)-f'(r_i)}{\dfrac{x-r_i}{2}}\\
&=~\lim_{x\rightarrow r_i}\dfrac{\dfrac{f(x)-f\left(\dfrac{x+r_i}{2}\right)}{\dfrac{x-r_i}{2}}-\dfrac{f\left(\dfrac{x+r_i}{2}\right)-f(r_i)}{\dfrac{x-r_i}{2}}}{\dfrac{x-r_i}{2}}\\
&=~2\,\lim_{x\rightarrow r_i}\dfrac{f(x)+f(r_i)-2\,f\left(\dfrac{x+r_i}{2}\right)}{\dfrac{(x-r_i)^2}{2}}\\[2pt]
&=~2\lim_{x\rightarrow r_i}f_3(x,r_i)=~2\,f_3(r_i,r_j)=0.
\end{align*}
Therefore, there exists an $r_k$ such that $k\neq i$, $k\neq j$ and $r_k=r_i=r_j$, which implies that $2\,r_k-r_i-r_j=0$. Hence $D_2=0$.

Now consider the case when $r_j\neq r_i$. In this case,
\[f_3(r_i,r_j)=\left[f(r_i)+f(r_j)-2\,f\left(\dfrac{r_i+r_j}{2}\right)\right]\bigg/\dfrac{(r_j-r_i)^2}{2}=0\]
implies that $f\left({(r_i+r_j)}/{2}\right)=0$, so $x={(r_i+r_j)}/{2}$ is a root of $f$. Therefore $D_2=0$.
\end{proof}

Since $D_2$ is irreducible over ${\Bbb Q}$, there exist a positive integer $q$ and a nonzero constant $c\in{\Bbb Q}$ such that
\[D_2^q=c\cdot\res(f, \res(f_1, f_3, y), x).\]
In what follows, we prove that $q=2$. For simplicity, we write $F$ for $\res(f_1, f_3, y)$ and $E$ for $\res(f, F, x)$.

\begin{theorem}\label{thm:qIs2}
Let $f$ be a univariate polynomial and $f_1,f_3$ be as in \eqref{eq:f1} and  \eqref{eq:f3}, and let
$F(x)=\res(f_1, f_3, y)$ and $E=\res(f, F, x)$.
Then $E=c\, D_2^2$, where $c$ is a nonzero rational number.
\end{theorem}

The proof of this theorem requires Lemmas \ref{lem:Fr1} and \ref{lem:deg_res_ff1f3}, of which the latter shows that $\deg(E, \bm{a})\leq 3\,(n-1)(n-2)+2\,(n-2)$.

\begin{lemma}\label{lem:Fr1}
Let $r_1,\ldots,r_n$ be the $n$ roots of a univariate polynomial $f$ and
$F(x)$ be the resultant of $f_1$ in \eqref{eq:f1} and $f_3$ in \eqref{eq:f3} with respect to $y$.
Then for any $k,j$ with $1<k\neq j$, $r_1-2\,r_k+r_j$ divides $F(r_1)$.
\end{lemma}

\begin{proof}
 It suffices to show that $F(r_1)=0$ when $r_1=2\,r_k-r_j$ for any fixed $k,j$ satisfying $1<k\neq j$.

According to the theory of resultants \cite[pp. 228]{M1993A}, there exist polynomials $A_1(x,y)$ and $A_3(x,y)$ such that
\[F(x)=A_1(x,y)f_1(x,y)+A_3(x,y)f_3(x,y).\]
Suppose that $r_j\neq r_1$. Since $f(r_1)=f(r_k)=f(r_j)=0$, substitution of $x=r_1$ and $y=r_j$ into $f_1$ and $f_3$ yields
\begin{align*}
f_1(r_1,r_j)&=\dfrac{f(r_j)-f(r_1)}{r_j-r_1}=0,\\
f_3(r_1,r_j)&=\dfrac{f(r_j)-2\,f\left(\dfrac{r_1+r_j}{2}\right)+f(r_1)}
{\dfrac{(r_j-r_1)^2}{2}}=\dfrac{f(r_j)-2\,f(r_k)+f(r_1)}
{\dfrac{(r_j-r_1)^2}{2}}=0.
\end{align*}
Suppose otherwise that $r_j=r_1$. Then $r_k=(r_1+r_j)/2=r_1$, which implies that $x=r_1$ is a root of $f$ with multiplicity greater than $2$. Thus $f(r_1)=f'(r_1)=f''(r_1)=0$. It follows that
\begin{align*}
f_1(r_1,r_j)&=\sum_{k=0}^{n-1}\frac{f^{(k+1)}(r_1)}{(k+1)!}(r_j-r_1)^k=f'(r_1)=0,\\
f_3(r_1,r_j)&=\sum_{k=0}^{n-2}\left(2-\dfrac{1}{2^{k}}\right)\dfrac{f^{(k+2)}(r_1)}{(k+2)!}\left(r_j-r_1\right)^k=\dfrac{f''(r_1)}{2!}=0.
\end{align*}
Hence, in both cases we have $f_1(r_1,r_j)=f_3(r_1,r_j)=0$. Therefore
\[F(r_1)=A_1(r_1,r_j)f_1(r_1,r_j)+A_3(r_1,r_j)f_3(r_1,r_j)=0,\]
so $r_1-2\,r_k+r_j$ divides $F(r_1)$.
\end{proof}

\begin{proof}[Proof of Theorem \ref{thm:qIs2}]
By Lemma \ref{lem:Fr1}, $(r_1-2\,r_k+r_j)\mid F(r_1)$ for arbitrarily chosen $k,j$ with $1<k\neq j$. Hence
\[\prod_{1<k\neq j}{(r_1-2\,r_k+r_j)}\mid F(r_1).\]
It follows from the theory of resultants \cite[p.\,398]{GKZ1994D} that
\[E=\prod_{i=1}^n F(r_i)=\prod_{i< j\neq k}{(r_i-2\,r_k+r_j)^2}\cdot K=D_2^2\cdot K\]
for some polynomial $K$ in $r_1,\ldots,r_n$. By Proposition~\ref{prop:resD2}, there exist a nonzero constant $c$ and an integer $q\geq2$ such that $E=c\, D_2^q$.

On the other hand, by Lemma \ref{lem:deg_res_ff1f3}, $\deg(E,\bm{a})\leq 3\,(n-1)(n-2)+2\,(n-2)$; by Proposition~\ref{prop:degD2}, $\deg(D_2,\bm{a})= {3\,(n-1)(n-2)}/{2}$. Under these constraints, the only possibility for $E=c\, D_2^q$ to hold is that $K$ is a constant and $q=2$.
\end{proof}

\section{The Second Discriminant with Ideals}\label{sec:ideals}

In searching for explicit representations of $D_2$ in terms of the coefficients of $f$, we have discovered the amazingly structured matrix $M$ formed with the derivatives of $f$ shown in \eqref{matM}. In what follows, we establish an inherent connection between the $(n-2)$th leading principal minor $H$ of $M$ and $\res(f_1, f_3, y)$, which reveals the hidden mystery for the structure of $M$.

Let $\langle f_1,\ldots,f_m\rangle$ denote the ideal generated by $f_1,\ldots,f_m$ in a ring of polynomials. The polynomials $f_1,\ldots,f_m$ are called the generators of the ideal.

\begin{lemma}\label{lem:equiv_ideal}
Let $f$ be a univariate polynomial and $f_1$, $f_2$, $f_3$ be as in \eqref{eq:f1}--\eqref{eq:f3}. Then
\begin{align*}
~\qquad\qquad&\!\!\!\!\!\!\!\!\!\!\!\!\!\!\!\!\!\!\!\!\!\!\!\!\!\!\!\!\!\!\left\langle f(x),f(y), f\left(\frac{x+y}{2}\right),w(x-y)-1\right\rangle\\
&=\left\langle f(x),f(y)-f(x), f\left(\frac{x+y}{2}\right)-f(x),w(x-y)-1\right\rangle\\[1pt]
&=\left\langle f(x),f_1(x,y),f_2(x,y),w(x-y)-1\right\rangle\\[4pt]
&=\left\langle f(x),f_1(x,y),f_3(x, y),w(x-y)-1\right\rangle,
\end{align*}
where $w$ is a new indeterminate.
\end{lemma}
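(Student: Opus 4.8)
The plan is to prove the chain of three equalities one link at a time, in each case showing that every generator of one ideal lies in the other. The four ideals share the generators $f(x)$ and $w(x-y)-1$, so at each link only the two or three remaining generators need attention. The decisive tool throughout is the last generator $w(x-y)-1$, which makes $x-y$ (equivalently $y-x$) invertible modulo the ideal. Concretely, for any polynomial $g\in\mathbb{Q}[x,y,w]$ I will use the identity
\[
g = g\bigl[w(x-y)-(w(x-y)-1)\bigr] = w(x-y)\,g - \bigl(w(x-y)-1\bigr)g ,
\]
so that whenever $(x-y)\,g$ (up to a scalar) already lies in an ideal containing $w(x-y)-1$, so does $g$ itself. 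This is the device that lets one "divide by $y-x$" inside the ideal, and it is the only place where $w(x-y)-1$ is actually used.

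For the first equality I would simply note that $f(x)$ is a common generator, so replacing $f(y)$ by $f(y)-f(x)$ and $f\bigl(\frac{x+y}{2}\bigr)$ by $f\bigl(\frac{x+y}{2}\bigr)-f(x)$ leaves the ideal unchanged: each new generator differs from the old by an integer multiple of the shared generator $f(x)$, and conversely. This requires nothing beyond elementary ideal manipulation.

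For the second equality I would invoke the polynomial identities $f(y)-f(x)=(y-x)\,f_1(x,y)$ and $f\bigl(\frac{x+y}{2}\bigr)-f(x)=\frac{y-x}{2}\,f_2(x,y)$, immediate from the definitions of $f_1$ and $f_2$. These exhibit $f(y)-f(x)$ and $f\bigl(\frac{x+y}{2}\bigr)-f(x)$ as polynomial multiples of $f_1$ and $f_2$, giving the inclusion of the third ideal in the second at once. For the reverse inclusion I would apply the division device: since $(y-x)\,f_1=f(y)-f(x)$ lies in the second ideal, so does $f_1$, and likewise $f_2$ follows from $\frac{y-x}{2}\,f_2=f\bigl(\frac{x+y}{2}\bigr)-f(x)$. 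For the third equality I would use the identity $\frac{y-x}{2}\,f_3(x,y)=f_1(x,y)-f_2(x,y)$, which is exactly the computation $f_3=(f_1-f_2)\big/\frac{y-x}{2}$ already verified in the proof of Proposition~\ref{prop:resD2}. Since $f_1$ is a shared generator, the relation $f_2=f_1-\frac{y-x}{2}\,f_3$ places $f_2$ in the fourth ideal, while $\frac{y-x}{2}\,f_3=f_1-f_2$ together with the division device places $f_3$ in the third ideal.

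I do not expect a deep obstacle here; the argument is essentially a bookkeeping of membership certificates. The one point that must be handled with care, rather than waved through, is the justification of the repeated "division by $y-x$": each such step should be recorded as an explicit combination produced by the device above, for instance $f_1=-w\bigl[f(y)-f(x)\bigr]-\bigl(w(x-y)-1\bigr)f_1$, so that every claimed inclusion is witnessed by an honest membership over $\mathbb{Q}[x,y,w]$. Making this invertibility rigorous in each of the three places it is needed is the whole content of the proof.
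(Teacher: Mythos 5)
Your proposal is correct and takes essentially the same route as the paper's proof: the same chain of pairwise inclusions, the same factorization identities $f(y)-f(x)=(y-x)\,f_1$, $f\bigl(\tfrac{x+y}{2}\bigr)-f(x)=\tfrac{y-x}{2}\,f_2$, and $f_2=f_1+\tfrac{x-y}{2}\,f_3$, and the same explicit membership certificates exploiting $w(x-y)-1$ to divide by $y-x$ (your certificate $f_1=-w\bigl[f(y)-f(x)\bigr]-\bigl(w(x-y)-1\bigr)f_1$, and likewise $f_3=-2\,wf_1+2\,wf_2-\bigl(w(x-y)-1\bigr)f_3$, appear verbatim in the paper). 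There is no gap.
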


\begin{proof}
Let the four ideals in the above identity be denoted successively by $\ideal{I}_1, \ldots, \ideal{I}_4$. It is obvious that $\ideal{I}_1=\ideal{I}_2$. We only need to show that $\ideal{I}_2=\ideal{I}_3$ and $\ideal{I}_3=\ideal{I}_4$.
\begin{enumerate}[(1)]
\item Since $f(y)-f(x)=f_1\cdot(y-x)$ and $f\left(\dfrac{x+y}{2}\right)-f(x)=f_2\cdot\dfrac{y-x}{2}$, we have $\ideal{I}_2\subset \ideal{I}_3$. On the other hand,
\begin{align*}
f_1&=-w\left[f(y)-f(x)\right]-[w(x-y)-1]\frac{f(y)-f(x)}{y-x},\\
f_2&=-2\,w\left[f\left(\dfrac{x+y}{2}\right)-f(x)\right]-2\,[w(x-y)-1]\frac{f\left(\dfrac{x+y}{2}\right)-f(x)}{y-x},
\end{align*}
so $\ideal{I}_3\subset \ideal{I}_2$.
\item $\ideal{I}_3\subset \ideal{I}_4$ follows from $f_2=f_1+\dfrac{x-y}{2}\cdot f_3$. $\ideal{I}_4\subset \ideal{I}_3$ can be easily deduced from $f_3=-2\,wf_1+2\,wf_2-f_3[w(x-y)-1]$.
\end{enumerate}
Therefore $\ideal{I}_1=\ideal{I}_2=\ideal{I}_3=\ideal{I}_4$.
\end{proof}

\begin{lemma}\label{lem:resH}
Let $f_1$, $f_3$ be as in \eqref{eq:f1} and \eqref{eq:f3}, and let
\begin{equation}\label{eq:g13}
g_1(x,y)=f_1(x-y,x+y), \quad g_3(x,y)=f_3(x-y,x+y),\quad G=\res(g_1,g_3,y).
\end{equation}
Then
$G=H^2$, where $H$ is as in Theorem~\ref{thm:D2}.
\end{lemma}

\begin{proof}
For any rational number $\gamma$, denote by $\lfloor \gamma \rfloor$ the biggest integer that is not greater than $\gamma$. Taking Taylor expansion for $g_1$ and $g_3$ at $x$, we have
\begin{align*}
g_1(x,y)&=\dfrac{f(x+y)-f(x-y)}{2\,y}\\
&=\dfrac{f(x+y)-f(x)}{2\,y}-\dfrac{f(x-y)-f(x)}{2\,y}\\
&=\dfrac{1}{2}\sum_{k=1}^n\left[1-(-1)^k\right]\dfrac{f^{(k)}(x)}{k!}y^{k-1}\\
&=\sum_{k=0}^{\lfloor\frac{n-1}{2}\rfloor}\dfrac{f^{(2\,k+1)}(x)}{(2\,k+1)!}y^{2\,k}
\end{align*}
and
\begin{align*}
g_3(x,y)&=\dfrac{f(x+y)+f(x-y)-2\,f(x)}{2\,y^2}\\
&=\dfrac{1}{2\,y}\cdot\left[\dfrac{f(x+y)-f(x)}{y}+\dfrac{f(x-y)-f(x)}{y}\right]\\
&=\dfrac{1}{2}\sum_{k=2}^n\left[1+(-1)^k\right]\dfrac{f^{(k)}(x)}{k!}y^{k-2}\\
&=\sum_{k=0}^{\lfloor\frac{n}{2}\rfloor-1}\dfrac{f^{(2\,k+2)}(x)}{(2\,k+2)!}y^{2\,k}.
\end{align*}
Let $g^*_1$ and $g^*_3$ be obtained from $g_1$ and $g_3$ by replacing $y^2$ with  $z$.
Then
\[\res(g_1^*,g_3^*,z)=
\begin{array}{c@{\hspace{-5pt}}l}
\left|\begin{array}{ccccc}
\frac{f^{(n-1)}}{(n-1)!}&\frac{f^{(n-3)}}{(n-3)!}&\frac{f^{(n-5)}}{(n-5)!}&\cdots\!&\!\cdots\\ [6pt]
0&\frac{f^{(n-1)}}{(n-1)!}&\frac{f^{(n-3)}}{(n-3)!}&\cdots\!&\!\cdots\\ [6pt]
\vdots&\vdots&\vdots&\ddots\!&\!\\[-10pt]
\vdots&\vdots&\vdots&\!&\!\hspace{-4pt}\raisebox{0.1cm}{\mbox{$\ddots$}}\\ [6pt]
\frac{f^{(n)}}{n!}&\frac{f^{(n-2)}}{(n-2)!}&\frac{f^{(n-4)}}{(n-4)!}&\cdots\!&\!\cdots\\ [6pt]
0&\frac{f^{(n)}}{n!}&\frac{f^{(n-2)}}{(n-2)!}&\cdots\!&\!\cdots\\ [6pt]
\vdots&\vdots&\vdots&\ddots\!&\!\\[-10pt]
\vdots&\vdots&\vdots&\!&\!\hspace{-4pt}\raisebox{0.1cm}{\mbox{$\ddots$}}
\end{array}\right|
&
\begin{array}{l}\left.\rule{0mm}{13mm}\right\}\left\lfloor{\dfrac{n}{2}}\right\rfloor-1\\ [6pt]
\\\left.\rule{0mm}{13mm}\right\}\left\lfloor{\dfrac{n-1}{2}}\right\rfloor
\end{array}
\end{array}
=\pm\, H.\]
Therefore,
\[G=\res(g_1,g_3,y)=[\res(g_1^*(x,z),g_3^*(x,z),z)]^2=H^2.\]

\vspace{-0.5cm}
\end{proof}

\begin{corollary}
Let $f$ be a monic univariate polynomial of degree $n$ with coefficients $a_0,\ldots,a_{n-1}$ and $D_2$ be the second discriminant of $f$. Then
$$D_2\in\left\langle f(x), g_1(x,y),g_3(x,y)\right\rangle\cap\mathbb{Q}[a_0,\ldots,a_{n-1}]$$
where $g_1$ and $g_3$ are as in \eqref{eq:g13}.
\end{corollary}

\begin{proof}
Let ${\cal K}=\mathbb{Q}[a_0,\ldots,a_{n-1}]$.
By Lemma \ref{lem:resH},
\[\res(f,G,x)=\res(f, H^2,x)=D_2^2.\]
According to the theory of resultants, there exist $A_1(x,z),A_2(x,z)\in{\cal K}[x,z]$ such that
\[H=A_1(x,z)g_1^*(x,z)+A_2(x,z)g_3^*(x,z).\]
Similarly, there exist $B_1(x),B_2(x)\in{\cal K}[x]$ such that
\begin{align*}
D_2&=\res(f,H,x)=B_1(x)f(x)+B_2(x)H\\
&=B_1(x)f(x)+B_2(x)[A_1(x,z)g_1^*(x,z)+A_2(x,t)g_3^*(x,z)].
\end{align*}
Substituting $z=y^2$, one gets
\begin{align*}
D_2&=B_1(x)f(x)+A_1(x,y^2)B_2(x)g_1(x,y)+A_2(x,y^2)B_2(x)g_3(x,y)\\
&\in\langle f, g_1,g_3\rangle\cap{\cal K}.
\end{align*}
The corollary is proved.
\end{proof}

\begin{theorem}\label{thm:D2in}
Let $f$ be a monic univariate polynomial of degree $n$ with coefficients $a_0,\ldots,a_{n-1}$ and $D_2$ be the second discriminant of $f$. Then
$$D_2\in\left\langle f(x), f_1(x,y),f_3(x,y)\right\rangle\cap\mathbb{Q}[a_0,\ldots,a_{n-1}]$$
where $f_1$ and $f_3$ are as in \eqref{eq:f1} and \eqref{eq:f3}.
\end{theorem}

\begin{proof}
Replace $x$ and $y$ in $f(x)$, $f_1(x-y,x+y)$, $f_3(x-y,x+y)$ by ${(Y+X)}/{2}$ and ${(Y-X)}/{2}$, respectively.
Since
$$D_2\in \langle f(x), f_1(x-y,x+y),f_3(x-y,x+y)\rangle$$
and $D_2$ does not involve $x$ and $y$,
\[D_2\in \left\langle f\left(\dfrac{X+Y}{2}\right),f_1(X,Y), f_3(X,Y)\right\rangle.\]

Furthermore, from \[f\left(\dfrac{X+Y}{2}\right)=-\dfrac{1}{4}(Y-X)^2f_3(X,Y)+\dfrac{1}{2}(Y-X)f_1(X,Y)+f(X),\] one can deduce
\[\langle f(X),f_1(X,Y), f_3(X,Y)\rangle=\left\langle f\left(\dfrac{X+Y}{2}\right),f_1(X,Y), f_3(X,Y)\right\rangle.\]
Therefore,
\[D_2\in\langle f(X),f_1(X,Y), f_3(X,Y)\rangle.\]
Substitution of $X=x$ and $Y=y$ back to the above expression, we have
\[D_2\in\langle f(x),f_1(x,y), f_3(x,y)\rangle\]
The proof is complete.
\end{proof}

\begin{corollary}\label{cor:D2in}
Let $f$ be a monic univariate polynomial of degree $n$ with coefficients $a_0,\ldots,a_{n-1}$ and $D_2$ be the second discriminant of $f$. Then
\[D_2\in\left\langle f(x),f(y),f\left(\frac{x+y}{2}\right),w(x-y)-1\right\rangle\cap\mathbb{Q}[a_0,\ldots,a_{n-1}]\]
where $w$ is a new indeterminate.
\end{corollary}

\begin{proof}
It follows from Lemma \ref{lem:equiv_ideal} and Theorem \ref{thm:D2in}.
\end{proof}

\begin{proposition}\label{propD2}
Let $f$ be a monic univariate polynomial of degree $n$ with coefficients $a_0,\ldots,a_{n-1}$ and $D_2$ be the second discriminant of $f$. Then
\[\langle D_2\rangle=\left\langle f(x),f(y), f\left(\frac{x+y}{2}\right),w(x-y)-1\right\rangle\cap \mathbb{Q}[a_0,\ldots,a_{n-1}]\]
where $w$ is a new indeterminate
\end{proposition}

\begin{proof}
Let $\ideal{I}_1$ and $\ideal{I}_4$ be as in the proof of Lemma \ref{lem:equiv_ideal}, which implies that
\[\ideal{I}_1\cap {\cal K}=\ideal{I}_4\cap {\cal K},\]
where ${\cal K}=\mathbb{Q}[a_0,\ldots,a_{n-1}]$. We proceed to show that
$\langle D_2\rangle=\ideal{I}_4\cap {\cal K}$.

Since $E=\res(f, \res(f_1, f_3, y), x)$, $E\in\ideal{I}_4\cap {\cal K}$. Let $(\bar{a}_0,\ldots,\bar{a}_{n-1},\bar{x},\bar{y},\bar{w})$ be any zero of
$\ideal{I}_4$ and $h$ be any polynomial in $\ideal{I}_4\cap {\cal K}$. Then
$$E(\bar{a}_0,\ldots,\bar{a}_{n-1})=h(\bar{a}_0,\ldots,\bar{a}_{n-1})=0.$$
By Theorem~\ref{thm:D2in},
$$D_2(\bar{a}_0,\ldots,\bar{a}_{n-1})=0,$$
so $D_2$ and $h$ have a nonconstant common divisor.
As $D_2$ is irreducible over $\mathbb{Q}$, $D_2\mid h$.

On the other hand, by Corollary \ref{cor:D2in}
\[D_2\in \left\langle f(x), f(y), f\left(\dfrac{x+y}{2}\right),w(x-y)-1\right\rangle = \ideal{I}_1=\ideal{I}_4.\]
Since $D_2\mid h$ for any $h\in\ideal{I}_4\cap{\cal K}$, the intersection $\ideal{I}_4\cap{\cal K}$ is a principal ideal generated by $D_2$. Therefore,
\[\langle D_2\rangle=\ideal{I}_4\cap {\cal K}=\ideal{I}_1\cap {\cal K}.\]

\vspace{-0.5cm}
\end{proof}

\begin{proposition}\label{propD2a}
Let $s_i$ be the elementary symmetric polynomial of degree $i$ in $r_1,\ldots,r_n$ and let $v_i=a_{n-i}-(-1)^{i}s_i$ for $i=1,\ldots,n$. Then
\begin{align*}
\Big\langle\prod_{
\scriptsize{i< j\neq k}}&(2\,r_k-r_i-r_j),~
v_1,\ldots,v_{n}\Big\rangle\cap \mathbb{Q}[a_0,\ldots,a_{n-1}]\\
=&\,\left\langle 2\,r_1-r_2-r_3,~
v_1,\ldots,v_{n}\right\rangle\cap \mathbb{Q}[a_0,\ldots,a_{n-1}]
=\left\langle D_2\right\rangle,
\end{align*}
where $D_2$ is the second discriminant of a monic univariate polynomial of degree $n$ with coefficients $a_0,\ldots,a_{n-1}$.
\end{proposition}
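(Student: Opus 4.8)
The plan is to realize both intersections through a single ring map and thereby reduce everything to a divisibility statement about symmetric polynomials. Write $\mathcal{K}=\mathbb{Q}[a_0,\ldots,a_{n-1}]$ and let $\phi\colon\mathbb{Q}[a_0,\ldots,a_{n-1},r_1,\ldots,r_n]\to\mathbb{Q}[r_1,\ldots,r_n]$ be the Vieta substitution $a_{n-i}\mapsto(-1)^is_i$, $r_j\mapsto r_j$. Each $v_i=a_{n-i}-(-1)^is_i$ is linear in the single variable $a_{n-i}$, and these variables are pairwise distinct, so the $v_i$ form a triangular system in the $a$'s; consequently $\ker\phi=\langle v_1,\ldots,v_n\rangle$ and $\phi$ induces an isomorphism $\mathbb{Q}[a,r]/\langle v_1,\ldots,v_n\rangle\cong\mathbb{Q}[r]$. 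Under $\phi$ the subring $\mathcal{K}$ maps isomorphically onto the ring $\mathbb{Q}[r_1,\ldots,r_n]^{S_n}$ of symmetric polynomials. I write $\Pi=\prod_{i<j\neq k}(2\,r_k-r_i-r_j)$ for the root form of $D_2$ in \eqref{eq:D2}, so that $\phi(D_2)=\Pi$, and set $L=2\,r_1-r_2-r_3$.

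First I would push both intersections through $\phi$. For an ideal $\langle g,v_1,\ldots,v_n\rangle$ containing $\ker\phi$, a polynomial $h\in\mathcal{K}$ lies in it if and only if its image lies in the image ideal, i.e. if and only if $\phi(h)\in\langle\phi(g)\rangle$ in $\mathbb{Q}[r]$. Since $\phi(\Pi)=\Pi$ and $\phi(L)=L$, the first intersection equals $\{h\in\mathcal{K}:\Pi\mid\phi(h)\}$ and the second equals $\{h\in\mathcal{K}:L\mid\phi(h)\}$, where in both cases $\phi(h)$ is automatically symmetric because $\phi(\mathcal{K})=\mathbb{Q}[r]^{S_n}$.

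The crux is to show that these two conditions coincide for symmetric $\phi(h)$. One direction is trivial: $L$ is one of the factors of $\Pi$, so $\Pi\mid\phi(h)$ forces $L\mid\phi(h)$. For the converse, suppose a symmetric polynomial $g$ is divisible by $L$. Applying each $\sigma\in S_n$ and using $\sigma(g)=g$ shows that every translate $2\,r_{\sigma(1)}-r_{\sigma(2)}-r_{\sigma(3)}$ divides $g$; as $\sigma$ ranges over $S_n$ these translates run through exactly the linear forms $2\,r_k-r_i-r_j$ occurring in $\Pi$, each such form arising. The key point — and the step I expect to be the main obstacle — is that these linear forms are pairwise non-associate: the variable carrying coefficient $+2$ pins down the index $k$, the two variables carrying coefficient $-1$ pin down the pair $\{i,j\}$, and no sign change can identify two distinct forms. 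Hence they are pairwise coprime irreducibles in the unique factorization domain $\mathbb{Q}[r]$, so their product $\Pi$ divides $g$. Thus for symmetric $g$ one has $L\mid g\iff\Pi\mid g$, and the two intersections coincide as the common set $\{h\in\mathcal{K}:\Pi\mid\phi(h)\}$.

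Finally I would descend this set back to $\mathcal{K}$. If $\Pi\mid\phi(h)$ with $\phi(h)$ symmetric, then writing $\phi(h)=\Pi\cdot q$ and using that $\Pi$ is a nonzero symmetric element of the domain $\mathbb{Q}[r]$ forces $q$ to be symmetric; hence $q=\phi(Q)$ for a unique $Q\in\mathcal{K}$, and $\phi(h-D_2Q)=0$ with $h-D_2Q\in\mathcal{K}$ gives $h=D_2Q$ by injectivity of $\phi|_{\mathcal{K}}$. Therefore $\{h\in\mathcal{K}:\Pi\mid\phi(h)\}=\langle D_2\rangle$. The reverse containment $\langle D_2\rangle\subseteq$ each intersection is immediate, since $D_2\in\mathcal{K}$ and $\phi(D_2)=\Pi$ is divisible by both $\Pi$ and $L$, so $D_2$ itself satisfies the membership condition. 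Combining these identifications establishes the asserted chain of equalities.
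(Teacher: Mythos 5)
Your proposal is correct, and it takes a genuinely different route from the paper's. The paper argues in two separate steps, leaning on its earlier results: it first proves $\mathcal{J}_1=\langle D_2\rangle$ by showing $D_2\in\mathcal{J}_1$ through division by the triangular system $v_n,\ldots,v_1$, and then showing $\mathcal{J}_1\subseteq\langle D_2\rangle$ via the irreducibility of $D_2$ (Proposition \ref{prop:irreducibility}), a gcd argument, and the observation that $\mathcal{J}_1$ has zeros so cannot be the unit ideal; it then proves $\mathcal{J}_1=\mathcal{J}_2$ using primality of $\langle D_2\rangle$, the fact that a prime ideal containing an intersection of ideals contains one of them, and the $S_n$-symmetry of $v_1,\ldots,v_n$ to identify all the ideals $\mathcal{J}_{ijk}=\langle 2\,r_k-r_i-r_j,v_1,\ldots,v_n\rangle\cap\mathcal{K}$. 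You instead handle both equalities uniformly through the Vieta quotient map $\phi$ with $\ker\phi=\langle v_1,\ldots,v_n\rangle$, which converts membership in either intersection into a divisibility condition on the symmetric polynomial $\phi(h)\in\mathbb{Q}[r_1,\ldots,r_n]^{S_n}$; your key lemma --- a symmetric polynomial divisible by $2\,r_1-r_2-r_3$ is divisible by the full product $\Pi=\prod_{i<j\neq k}(2\,r_k-r_i-r_j)$, because the $S_n$-orbit of that factor consists of pairwise non-associate irreducibles in the UFD $\mathbb{Q}[r_1,\ldots,r_n]$ --- replaces the paper's primality-plus-symmetry argument, and your descent step (the cofactor $q$ is symmetric, hence $q=\phi(Q)$ and $h=D_2Q$) replaces the paper's appeal to irreducibility of $D_2$ in the coefficients. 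What your approach buys: it is self-contained (no use of Proposition \ref{prop:irreducibility}, no zero-set reasoning), it yields the explicit factorization $h=D_2Q$ rather than a pure divisibility conclusion, and it supplies a proof of precisely the inclusion $\mathcal{J}_1\supseteq\bigcap_{i<j\neq k}\mathcal{J}_{ijk}$ that the paper introduces with only an ``Observe that'' --- that inclusion is exactly your coprimality lemma in disguise. What the paper's approach buys: given its already-established irreducibility result, the argument is shorter, and it foregrounds the ideal-theoretic structure (primality of the elimination ideal) that Section \ref{sec:ideals} is designed to exhibit.
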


\begin{proof}
Let ${\cal K}=\mathbb{Q}[a_0,\ldots,a_{n-1}]$ as before and
\begin{align*}
\ideal{J}_1=&\left\langle\prod_{
\scriptsize{i< j\neq k}}(2\,r_k-r_i-r_j),~
v_1,\ldots,v_{n}\right\rangle\cap {\cal K},\\
\ideal{J}_2=&\,\left\langle 2\,r_1-r_2-r_3,~
v_1,\ldots,v_{n}\right\rangle\cap {\cal K}.
\end{align*}

Proof of $\ideal{J}_1=\left\langle D_2\right\rangle$.
Note first that each $v_i~(1\leq i\leq n)$ is a polynomial monic and linear in $a_{n-i}$. Dividing $D_2$ by $v_n,\ldots,v_1$ with respect to $a_0,\ldots,a_{n-1}$ respectively, one can obtain a remainder $R$ in $r_1,\ldots, r_n$. Then there exist polynomials $A_1,\ldots,A_n\in\mathbb{Q}[a_0,\ldots,a_{n-1},r_1,\ldots,r_n]$ such that
\[D_2=A_1v_1+\cdots+A_nv_n+R.\]
Substituting $a_{n-i}=(-1)^is_i$ into the above formula and by Theorem \ref{thm:D2}, we have
\[R=\prod_{\scriptsize{i< j\neq k}}(2\,r_k-r_i-r_j).\]
Therefore, $D_2$ can be written as a linear combination of polynomials in $\ideal{J}_1$. This implies that
$D_2\in \ideal{J}_1$
and thus $\left\langle D_2\right\rangle\subset\ideal{J}_1$.

To show that $\ideal{J}_1\subset\left\langle D_2\right\rangle$, let $h$ be any polynomial in $\ideal{J}_1$.
Then the greatest common divisor $\gcd(h,D_2)$ of $h$ and $D_2$ is contained in the ideal $\ideal{J}_1$. As $D_2$ is irreducible over ${\Bbb Q}$, $\gcd(h,D_2)$ is either a nonzero constant, or equal to $D_2$. If $\gcd(h,D_2)$ is a nonzero constant, then $\ideal{J}_1$ is equal to the unit ideal, which is not possible because for any $r_1,\ldots,r_n$ satisfying
$\prod_{
\scriptsize{i< j\neq k}}(2\,r_k-r_i-r_j)=0$, there always exist
$a_0,\ldots,a_{n-1}$ such that $v_1=\cdots=v_{n}=0$, i.e., $\ideal{J}_1$ always has zeros. Therefore, $\gcd(h,D_2)=D_2$ and $D_2\mid h$. It follows that $h\in\left\langle D_2\right\rangle$.

Proof of $\ideal{J}_1=\ideal{J}_2$.
Since $\ideal{J}_1\subset\ideal{J}_2$ holds obviously, we only need to show that $\ideal{J}_2\subset\ideal{J}_1$.
Observe that
\[
\ideal{J}_1\supset\bigcap_{i< j\neq k}\langle 2\,r_k-r_i-r_j,\,
v_1,\ldots,v_{n}\rangle\cap {\cal K}.
\]
Since $\ideal{J}_1=\langle D_2\rangle$ is a prime ideal, there exist $i< j\neq k$ such that
\[\ideal{J}_1\supset\ideal{J}_{ijk}=\langle 2\,r_k-r_i-r_j,~
v_1,\ldots,v_{n}\rangle\cap {\cal K}\]
and $\ideal{J}_{ijk}$ is prime.
Note that $v_1,\ldots,v_n$ are symmetric in $r_1,\ldots,r_n$. Hence the primality of $\ideal{J}_{ijk}$ implies the primality of $\ideal{J}_{\mu \nu \kappa}$ for all $\mu <\nu \neq\kappa$.  Therefore, all the $\ideal{J}_{\mu \nu \kappa}$ are identical. Hence
\[\ideal{J}_1\supset\ideal{J}_2=\langle 2\,r_1-r_2-r_3,~
v_1,\ldots,v_{n}\rangle\cap {\cal K}.\]

\vspace{-0.5cm}
\end{proof}

As shown by Propositions~\ref{propD2} and \ref{propD2a}, there are several
ideals with different generators whose intersections with ${\cal K}$ are equal to
$\langle D_2\rangle$. The generator $D_2$ of the principal ideal $\langle D_2\rangle$, which is an \emph{elimination ideal} of ${\cal I}_1=\cdots={\cal I}_4$ or ${\cal J}_1={\cal J}_2$, can be obtained by computing the reduced lexicographical Gr\"obner basis of any of the ideals ${\cal I}_{\mu }$ and ${\cal J}_{\nu }$ (see \cite[Lemma 6.8]{B1985G}).

\section{Degrees of Some Determinant Polynomials}\label{sec:determinant}

The two determinant polynomials $H$ and $F=\res(f_1, f_3, y)$, defined in Theorem \ref{thm:D2} and Proposition \ref{prop:resD2} respectively, can be used for the construction of the second discriminant $D_2$. In what follows, we provide some simple formulas for the exact degrees of $H$ and $F$ in $x$, which may be used for complexity analysis of $D_2$.

\begin{lemma}\label{lem:degree_bound_H}
Let $n$ be the degree of a univariate polynomial $f$ and $H$ be as in Theorem \ref{thm:D2}. Then $\deg(H,x)\leq (n-1)(n-2)/2$.
\end{lemma}

\begin{proof}
Let $g_1$, $g_3$ and $g_1^*$, $g_3^*$ be as in Lemma \ref{lem:resH} and its proof. Then \[G=\res(g_1,g_3,y)=[\res(g_1^*(x,z),g_3^*(x,z),z)]^2=H^2.\] Now consider
\[\Delta (\bm{a},x,y,\alpha)=\left|
\begin{array}{cc}\smallskip
g_1(x,y)&g_3(x,y)\\
g_1(x,\alpha)&g_3(x,\alpha)
\end{array}
\right|\bigg/(y-\alpha)\]
and let $\bm{\nu}=(x, y,\alpha)$ and $\tilde{n}=2\,\lfloor\frac{n-1}{2}\rfloor$.
It is easy to see that $\Delta$ is of degree
$2\,n-4$ in $\bm{\nu}$ and
$\deg(\Delta,\alpha)=\deg(\Delta,y)= \tilde{n}-1$, $\deg(\Delta,\bm{a})\leq 2$.
Let $\Delta$ be written as
\[\Delta=\sum_{\scriptsize\begin{array}{c}\scriptsize 0 \leq i\leq 2\,n-4\\ \scriptsize 0 \leq j,k\leq \tilde{n}-1
\end{array}}\delta_{ijk}x^iy^j\alpha^k.\]
Then for every term $x^iy^j\alpha^k$ occurring in $\Delta$, $i+j+k\leq \deg(\Delta, \bm{\nu})=2\,n-4$, so $i\leq 2\,n-4-j-k$.

Denote by \[B=(b_{j+1,k+1})=\left(\sum_{i=0}^{2\,n-4}\delta_{ijk}x^i\right)\] the $\tilde{n}\times \tilde{n}$ B\'ezout matrix of $g_1$ and $g_3$ with respect to $y$. It follows that
$$\deg(b_{jk},x)\leq 2\,n-2-j-k.$$
Let $(k_1,\ldots,k_{\tilde{n}})$ denote an arbitrary permutation of $(1,\ldots, \tilde{n})$ and $\bar{G}=\det(B)$.
Then
\[\begin{array}{rl}\smallskip
2\,\deg(H,x)\!\!\!&\displaystyle=\deg(H^2,x)=\deg(G,x)=\deg(\bar{G},x)\\[10pt]
&\displaystyle\leq\max_{(k_1,\ldots,k_{\tilde{n}})}\deg(b_{1k_1}\cdots b_{\tilde{n}~k_{\tilde{n}-1}},x) \\[10pt]
&=\max_{(k_1,\ldots,k_{\tilde{n}})}\sum_{j=1}^{\tilde{n}}\deg(b_{jk_j},x) \\[10pt]
&\displaystyle \leq \max_{(k_1,\ldots,k_{\tilde{n}})}\sum_{j=1}^{\tilde{n}}[2\,n-2-j-k_j] \\[10pt]
&=(2\,n-2)\cdot\tilde{n}-\sum_{j=1}^{\tilde{n}}j-\min_{(k_1,\ldots,k_{\tilde{n}})}
\sum_{j=1}^{\tilde{n}}k_j \\[10pt]
&\displaystyle =(2\,n-2)\cdot\tilde{n}-(\tilde{n}+1)\cdot\tilde{n}=(n-1)(n-2).
\end{array}
\]
Therefore, $\deg(H,x)\leq(n-1)(n-2)/2$.
\end{proof}

Lemma \ref{lem:degree_bound_H} provides an upper bound for $\deg(H, x)$. In what follows, we show that the bound can be achieved for a particular polynomial. Thus the degree of $H$ constructed from the generic form of $f$ is equal to the bound.

\begin{lemma}\label{lem:degree_specialH}
For $a_0=\cdots=a_{n-1}=0$, $\deg(H, x)={(n-1)(n-2)}/{2}$, where $n$ is the degree of a monic univariate polynomial $f$ with coefficient $a_0,\ldots,a_{n-1}$ and $H$ is as in Theorem \ref{thm:D2}.
\end{lemma}

\begin{proof}
When $a_0=\cdots=a_{n-1}=0$, ${H}$ becomes the $(n-2)$th leading principal minor of the following matrix
\[\left(
\begin{array}{ccccc}\smallskip
C_n^2x^{n-2}&C_n^4x^{n-4}&C_n^6x^{n-6}&\cdots\!&\!\cdots\\ [3pt]
C_n^1x^{n-1}&C_n^3x^{n-3}&C_n^5x^{n-5}&\cdots\!&\!\cdots\\ [3pt]
0&C_n^2x^{n-2}&C_n^4x^{n-4}&\cdots\!&\!\cdots\\ [3pt]
0&C_n^1x^{n-1}&C_n^3x^{n-3}&\cdots\!&\!\cdots\\[3pt]
\vdots&\vdots&\vdots&\ddots\!&\!\\[-10pt]
\vdots&\vdots&\vdots&\!&\!\hspace{-4pt}\raisebox{0.1cm}{\mbox{$\ddots$}}
\end{array}
\right).\]
Simple calculation shows that
\begin{align*}
&\!\!\!\!\!\!\!\!\!\!\!\!\!\!\!\!\!\!\!\!\!\!\!\!\!\!\!\!\!\!\!\!\!\!\!\!\!\!\!
\!\!\!\!\!\!\!\!\!\!\!\!\!\!\!\!\!\!\!
\left|
\begin{array}{ccccc}\smallskip
C_n^2x^{n-2}&C_n^4x^{n-4}&C_n^6x^{n-6}&\cdots\!&\!\cdots\\ [3pt]
C_n^1x^{n-1}&C_n^3x^{n-3}&C_n^5x^{n-5}&\cdots\!&\!\cdots\\ [3pt]
0&C_n^2x^{n-2}&C_n^4x^{n-4}&\cdots\!&\!\cdots\\ [3pt]
0&C_n^1x^{n-1}&C_n^3x^{n-3}&\cdots\!&\!\cdots\\ [3pt]
\vdots&\vdots&\vdots&\ddots\!&\!\\[-11pt]
\vdots&\vdots&\vdots&\!&\!\hspace{-6pt}\raisebox{0.1cm}{\mbox{$\ddots$}}
\end{array}
\right|
\end{align*}
\begin{align*}
\xlongequal[\substack{{\rm ro}_2\div x \\[2pt] \ldots \\[2pt] {\rm ro}_{n-2}\div x^{n-3}}]{\substack{{\rm co}_1\div x^{n-2}\\[2pt] {\rm co}_2\div x^{n-4}, {\rm co}_{n-2}\times x^{n-4}\\[2pt] {\rm co}_3\div x^{n-3}, {\rm co}_{n-3}\times x^{n-3}\\[2pt] \cdots}}
&\left|
\begin{array}{ccccc}\smallskip
C_n^2&C_n^4&C_n^6&\cdots\!&\!\cdots\\ [3pt]
C_n^1&C_n^3&C_n^5&\cdots\!&\!\cdots\\ [3pt]
0&C_n^2&C_n^4&\cdots\!&\!\cdots\\ [3pt]
0&C_n^1&C_n^3&\cdots\!&\!\cdots\\ [3pt]
\vdots&\vdots&\vdots&\ddots\!&\!\\[-11pt]
\vdots&\vdots&\vdots&\!&\!\hspace{-6pt}\raisebox{0.1cm}{\mbox{$\ddots$}}
\end{array}
\right|x^{(n-2)+1+2+\cdots+(n-3)}\\ \smallskip
=&\left|
\begin{array}{ccccc}\smallskip
C_n^2&C_n^4&C_n^6&\cdots\!&\!\cdots\\ [3pt]
C_n^1&C_n^3&C_n^5&\cdots\!&\!\cdots\\ [3pt]
0&C_n^2&C_n^4&\cdots\!&\!\cdots\\ [3pt]
0&C_n^1&C_n^3&\cdots\!&\!\cdots\\ [3pt]
\vdots&\vdots&\vdots&\ddots\!&\!\\[-11pt]
\vdots&\vdots&\vdots&\!&\!\hspace{-6pt}\raisebox{0.1cm}{\mbox{$\ddots$}}
\end{array}
\right|x^{\frac{(n-1)(n-2)}{2}}\doteq c_{n}x^{\frac{(n-1)(n-2)}{2}}.
\end{align*}
In what follows, we prove that $c_n\neq 0$.

Let
\begin{align*}
U&=C_n^2z^2+C_n^4z^4+\cdots+C_n^{2\left\lfloor\frac{n}{2}\right\rfloor}z^{2\left\lfloor\frac{n}{2}\right\rfloor},\\[8pt]
V&=C_n^1z+C_n^3z^3+\cdots+C_n^{2\left\lfloor\frac{n+1}{2}\right\rfloor -1}z^{2\left\lfloor\frac{n+1}{2}\right\rfloor-1},
\end{align*}
and $\bar{U}$ and $\bar{V}$ be obtained from $U/z^2$ and $V/z$, respectively, by replacing $z^2$ with $t$. Then $c_n=\pm\,\res(\bar{U}, \bar{V}, t)$.
If $c_n=0$, then $U/z^2$ and $V/z$ have at least one common zero, say $\bar{z}$, where $\bar{z}\neq0$. Note that
\[U+V=(z+1)^n-C_n^0.\]
Substituting $z=\bar{z}$ into the above equation, we have
$(\bar{z}+1)^n-1=0$.
Similarly, $$(U-V)|_{z=\bar{z}}=(\bar{z}-1)^n-1=0.$$
Therefore, there exist two unit roots $u_1, u_2$ such that $\bar{z}+1=u_1$ and $\bar{z}-1=u_2$, which leads to $u_1-u_2=2$. In other words, $u_1$ and $u_2$ have the same imaginary part and the difference of their real parts is $2$. This can happen only when $u_1=1$ and $u_2=-1$. Therefore, $\bar{z}=0$, which leads to contradiction since $\bar{z}$ is nonzero. Hence the conclusion holds.
\end{proof}

The following theorem follows from Lemmas \ref{lem:degree_bound_H} and \ref{lem:degree_specialH}.
\begin{theorem}
Let $f$ and $H$ be as in Theorem \ref{thm:D2} and ￥$n$ be the degree of $f$. Then
$$\deg(H, x)= (n-1)(n-2)/2.$$
\end{theorem}

Similarly, we have the following theorem.

\begin{theorem}\label{thm:deg_res_f1f3}
Let $f$ and $F$ be as in Theorem \ref{thm:qIs2} and ￥$n$ be the degree of $f$. Then $$\deg(F, x)= (n-1)(n-2).$$
\end{theorem}

This theorem is established by proving the following two lemmas.

\begin{lemma}\label{lem:deg_res_f1f3}
Let $f$ and $F$ be as in Theorem \ref{thm:qIs2} and ￥$n$ be the degree of $f$. Then $$\deg(F,x)\leq (n-1)(n-2).$$
\end{lemma}

\begin{proof}
Let
\[\Delta (\bm{a},x,y,\alpha)=\left|
\begin{array}{cc}\smallskip
f_1(x,y)&f_3(x,y)\\
f_1(x,\alpha)&f_3(x,\alpha)
\end{array}
\right|\bigg/(y-\alpha)\]
and $\bm{\nu}=(x, y,\alpha)$.
It is easy to see that $\Delta$ is of degree $2\,n-4$ in $\bm{\nu}$ and
$$\deg(\Delta,\alpha)=\deg(\Delta,y)= n-2,$$ $\deg(\Delta,\bm{a})\leq 2$.
Let $\Delta$ be written as
\[\Delta=\sum_{\scriptsize\begin{array}{c}\scriptsize 0 \leq i\leq 2n-4\\ \scriptsize 0 \leq j,k\leq n-2
\end{array}}\delta_{ijk}x^iy^j\alpha^k.\]
Then for every term $x^iy^j\alpha^k$ occurring in $\Delta$,
$$i+j+k\leq \deg(\Delta, \bm{\nu})=2\,n-4,$$
so $i\leq 2\,n-4-j-k$.

Denote by \[B=(b_{j+1,k+1})=\left(\sum_{i=0}^{2\,n-4}\delta_{ijk}x^i\right)\] the $(n-1)\times (n-1)$ B\'ezout matrix of $f_1$ and $f_3$ with respect to $y$. It follows that $$\deg(b_{jk},x)\leq 2\,n-2-j-k.$$
Let $(k_1,\ldots,k_{n-1})$ denote an arbitrary permutation of $(1,\ldots, n-1)$ and $\bar{F}=\det(B)$. According to the theory of resultants \cite{B1779T}, $F=\res(f_1,f_3,y)=\pm\, \bar{F}$. Therefore,
\[\begin{array}{rl}\medskip
\deg(F,x)\!\!\!&\displaystyle=\deg(\bar{F},x)\leq\max_{(k_1,\ldots,k_{n-1})}\deg(b_{1k_1}\cdots b_{n-1,k_{n-1}},x) =\max_{(k_1,\ldots,k_{n-1})}\sum_{j=1}^{n-1}\deg(b_{jk_j},x) \\ \medskip
&\displaystyle \leq \max_{(k_1,\ldots,k_{n-1})}\sum_{j=1}^{n-1}(2\,n-2-j-k_j)
=(2\,n-2)(n-1)-\sum_{j=1}^{n-1}j-\min_{(k_1,\ldots,k_{n-1})}
\sum_{j=1}^{n-1}k_j \\
&\displaystyle =2\,(n-1)^2-(n-1)n=(n-1)(n-2).
\end{array}
\]

\vspace{-0.5cm}
\end{proof}

\begin{lemma}
For $a_0=\cdots=a_{n-1}=0$, $\deg(F,x)=(n-1)(n-2)$, where $n$ is the degree of a monic univariate polynomial $f$ with coefficient $a_0,\ldots,a_{n-1}$ and $F$ is as in Theorem \ref{thm:qIs2}.
\end{lemma}

\begin{proof}
When $a_0=\cdots=a_{n-1}=0$, $f=x^n$.
We first prove that $x=0$ is equivalent to $F=0$.

($\Longrightarrow$) If $x=0$, then $f_1=y^{n-1}$ and $f_3=\left(2-{1}/{2^{n-2}}\right)y^{n-2}$. In this case, $f_1$ and $f_3$ have a common zero and thus $F=0$.

($\Longleftarrow$) Let $F=0$; then $f_1$ and $f_3$ have at least one common zero for $y$, say $\bar{y}$. Then
\[
f_1(x,\bar{y})=\dfrac{\bar{y}^n-x^n}{\bar{y}-x}=0, \quad f_3(x,\bar{y})=\dfrac{\bar{y}^n-2\,\left(\dfrac{\bar{y}+x}{2}\right)^n+x^n}{\dfrac{(\bar{y}-x)^2}{2}}=0.
\]
Suppose that $x\neq 0$ and let $\bar{t}=\bar{y}/x$. Then the above equalities imply that
\[\bar{t}^n=1,\quad\left(\frac{1}{2}+\frac{1}{2}\bar{t}\right)^n=1.\]
Therefore, there exist two unit roots $u_1$ and $u_2$ such that $\bar{t}=u_1$ and $(1+\bar{t})/2=u_2$, which implies that $u_2=(1+u_1)/2$. This can happen only when $u_1=u_2=1$; so $\bar{y}=x$. Thus
\[f_1(x,\bar{y})=\dfrac{y^n-x^n}{y-x}=x^{n-1}+x^{n-2}\bar{y}+\cdots+x\bar{y}^{n-2}+x\bar{y}^{n-1}=nx^{n-1}=0,\]
which implies that $x=0$. This contradicts the assumption that $x\neq0$. Therefore, $x=0$.

Since $x=0$ and $F=0$ are equivalent, there exist a nonzero constant $c$ and an integer $N\geq1$ such that $F=c\,x^N$. It remains to show that $N=(n-1)(n-2)$.

Let
\[\Delta (x,y,\alpha)=\left|
\begin{array}{cc}\smallskip
f_1(x,y)&f_3(x,y)\\
f_1(x,\alpha)&f_3(x,\alpha)
\end{array}
\right|\bigg/(y-\alpha)\]
and $\bm{\nu}=(x, y,\alpha)$.
It is easy to see that $\Delta$ is homogeneous of degree $2\,n-4$ in $\bm{\nu}$ and
$\deg(\Delta,\alpha)=\deg(\Delta,y)= n-2$.
Let $\Delta$ be written as
\[\Delta=\sum_{\scriptsize\begin{array}{c}\scriptsize 0 \leq i\leq 2n-4\\ \scriptsize 0 \leq j,k\leq n-2
\end{array}}\delta_{ijk}x^iy^j\alpha^k.\]
Then for every term $x^iy^j\alpha^k$ occurring in $\Delta$, $i+j+k= \deg(\Delta, \bm{\nu})=2\,n-4$, so $i=2\,n-4-j-k$.

Denote by \[B=(b_{j+1,k+1})=\left(\sum_{i=0}^{2\,n-4}\delta_{ijk}x^i\right)\] the $(n-1)\times (n-1)$ B\'ezout matrix of $f_1$ and $f_3$ with respect to $y$ and let $\bar{F}=\det(B)$. According to the theory of resultants \cite{B1779T}, $F=\res(f_1,f_3,y)=\pm\, \bar{F}$, so $\deg(\bar{F},x)\geq1$.

Note that for any entry $b_{jk}$ in $B$, either $b_{jk}=0$ or $\deg(b_{jk},x)= 2\,n-2-j-k$. Let $(k_1,\ldots,k_{n-1})$ denote an arbitrary permutation of $(1,\ldots, n-1)$.
Then either $b_{1k_1}\cdots b_{n-1,k_{n-1}}=0$, or
\begin{align*}
\deg(b_{1k_1}\cdots b_{n-1,k_{n-1}},x)&=\sum_{j=1}^{n-1}\deg(b_{jk_j},x)=\sum_{j=1}^{n-1}(2\,n-2-j-k_j)\\
&=(2\,n-2)(n-1)-\sum_{j=1}^{n-1}j-\sum_{j=1}^{n-1}k_j \\
&=2\,(n-1)^2-(n-1)n=(n-1)(n-2).
\end{align*}

Note that $\bar{F}\neq 0$, so $\deg(\bar{F},x)=(n-1)(n-2)$. It follows that $\deg(F,x)=(n-1)(n-2)$.
\end{proof}

The following lemma has been used for the proof of Theorem \ref{thm:qIs2}.
\begin{lemma}\label{lem:deg_res_ff1f3}
Let $n$ be the degree of a monic univariate polynomial with coefficients
$\bm{a}=(a_0,\ldots,a_{n-1})$ and $E$ be as in Theorem \ref{thm:qIs2}.
Then $\deg(E,\bm{a})\leq 3\,(n-1)(n-2)+2\,(n-2)$.
\end{lemma}

\begin{proof}
Let $N=\deg(F,x)$; then $N\leq(n-1)(n-2)$ according to Lemma \ref{lem:deg_res_f1f3}. Moreover, from the proof of Lemma \ref{lem:deg_res_f1f3} we know that $\deg(F,\bm{a})\leq 2\,(n-2)$. Since $E$ is a determinant formed with $N$ rows of $f$-coefficients and $n$ rows of $F$-coefficients, the degree of each $f$-coefficient is at most $1$, and the degree of each $F$-coefficient is at most $2\,(n-2)$, the degree of $E$ is at most $N\cdot 1+n\cdot 2\,(n-2)\leq 3\,(n-1)(n-2)+2\,(n-2)$. The proof is complete.
\end{proof}

From Proposition \ref{prop:degD2} and Theorem \ref{thm:qIs2} the following corollary follows.

\begin{corollary}
Let $n$ be the degree of a monic univariate polynomial with coefficients
$\bm{a}=(a_0,\ldots,a_{n-1})$ and $E$ be as in Theorem \ref{thm:qIs2}. Then $\deg(E,\bm{a})= 3\,(n-1)(n-2)$.
\end{corollary}

The result of this corollary allows us to reduce the upper bound
$$3\,(n-1)(n-2)+2\,(n-2)$$
of $\deg(E,\bm{a})$ to $3\,(n-1)(n-2)$, the exact degree of $E$ in $\bm{a}$, which is also the degree of $D_2^2$ in $\bm{a}$.

\begin{remark}
{The determinant polynomials $F$ and $H$ are both irreducible over $\mathbb{Q}[\bm{a}]$. The irreducibility of $H$ is obvious because $D_2=\res(f, H,x)$ is irreducible and that of $F$ can be proved by using the symmetry of $F(r_1)$ with respect to $r_2,\ldots,r_n$.\footnote{Let $F(\bm{a},x)=F_1(\bm{a},x)F_2(\bm{a},x)$ with $\deg(F_1,x)\neq 0$. In this equality, substitution of $x$ by $r_1$ and elimination of each $a_i$ by using Vieta's formula yield $\bar{F}(r_1,\ldots,r_n)=\bar{F}_1(r_1,\ldots,r_n)\bar{F}_2(r_1,\ldots,r_n)$, where $\bar{F}$, $\bar{F}_1$, and $\bar{F}_2$ are all symmetric with respect to $r_2,\ldots,r_n$.
From the proof of Theorem \ref{thm:qIs2}, one sees that $\bar{F}=c\prod_{k\neq j}(r_1-2\,r_k+r_j)$ for some constant $c$. Thus $\bar{F}_1$ has at least one divisor $r_1-2\,r_k+r_j$ for some $j\neq k$. The symmetry of $\bar{F}_1$ with respect to $r_2,\ldots,r_n$ implies that $\prod_{\scriptsize{1<k\neq j}}(r_1-2\,r_k+r_j)$ is also a divisor of $\bar{F}_1$. Therefore, $\bar{F}_1$ differs from $\bar{F}$ only by a nonzero constant, and so does $F_1$ from $F$. It follows that $F_2$ is a constant. This proves the irreducibility of $F$.}
Hence $F$ and $H$ do not have any common divisor. On the other hand, $G$ is obtained from $f_1$ and $f_3$ via linear transformation and resultant computation and $F$ is connected to $H$ via $G$ by the relations
\begin{align*}
\res(f,F,x)&=\res(f,G,x)=[\res(f, H, x)]^2,\\
\deg(F, x)&=\deg(G,x)=2\,\deg(H,x),
\end{align*}
and $G=H^2$. However, it is unclear whether there is any direct connection between $F$ and $G$. Note that $F$ and thus $D_2$ are constructed from $f$, $f_1$, and $f_3$ naturally; yet the occurrence of the sequences of odd derivatives and even derivatives of $f$ with respect to $x$ in the determinant expressions of $H$ and $G$ remains uninterpretable. Meaningful interpretations of the occurrence might be figured out by exploring direct connections between $F$ and $G$.}
\end{remark}

%====================================================================================
\section{Application and Remarks}
\label{sec:ApplicationRemarks}
%====================================================================================

In this section, we illustrate the usefulness of the second discriminant by an application (to the classification of root configurations for the cubic polynomial) and discuss the possibility of introducing discriminants of higher order.

The form $r_i-r_j$ in $D_1$ can be viewed as the vector from $r_j$ to $r_i$, considered as two points in the complex plane. Similarly, the form $2\,r_k-r_i-r_j$ in $D_2$ can be viewed as twice the vector from the middle point of $r_i$ and $r_j$ to $r_k$. The signs of $D_1$ and $D_2$ carry information about the distribution, position, and relative configuration of the roots $r_1,\ldots,r_n$ of $f$. Therefore, $D_1$ and $D_2$ can be used to explore such structural properties of the roots of $f$ without exactly computing them out.

For the cubic polynomial $f=x^3+a_2x^2+a_1x+a_0$, we have the following Lagrange formula with radicals for its three roots:
\[
r_1\,=\dfrac{-a_2+\omega^1c_1+\omega^2c_2}{3},\quad
r_2\,=\dfrac{-a_2+\omega^0c_1+\omega^2c_2}{3},\quad
r_3\,=\dfrac{-a_2+\omega^2c_1+\omega^1c_2}{3},\quad
\]
where $\omega=e^{\frac{2\pi}{3}{\rm i}}=-\frac{1}{2}+\frac{\sqrt{3}}{2}{\rm i}$ and
\[c_1=\sqrt[3]{(D_2+2\,\sqrt{-3\,D_1})/2},\quad c_2=\sqrt[3]{(D_2-2\,\sqrt{-3\,D_1})/2}.\]
Using the above formula, one can classify the roots of $f$ into 9 types of configurations according to the signs of $D_1$ and $D_2$ as shown in \cref{table:cubicconfig} (cf.\ \cite{ZWH2011S}).

\begin{table}[htbp]
\caption{Types of configurations for the roots $r_1,r_2,r_3$ of the cubic polynomial $f$, where Re($r_1$)\,$\geq\,$Re($r_2$)$\,\geq\,$Re($r_3$) and the red points of small, middle, and large sizes stand respectively for single, double, and triple roots of $f$.}\label{table:cubicconfig}
\begin{center}
\begin{tabular*}{0.92835\textwidth}{|c|c|c|c|}
\hline
&&&\\[-0.2cm]
&  $D_2<0$ & $D_2=0$      & $D_2>0$\\[0.1cm]
\hline
&&&\\
$D_1>0$
&\includegraphics[width=0.25\textwidth]{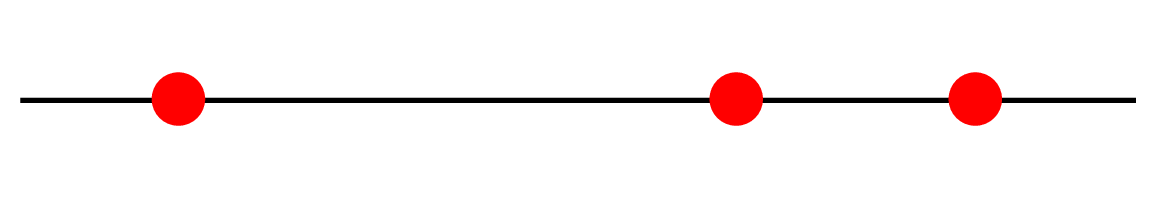}
&\includegraphics[width=0.25\textwidth]{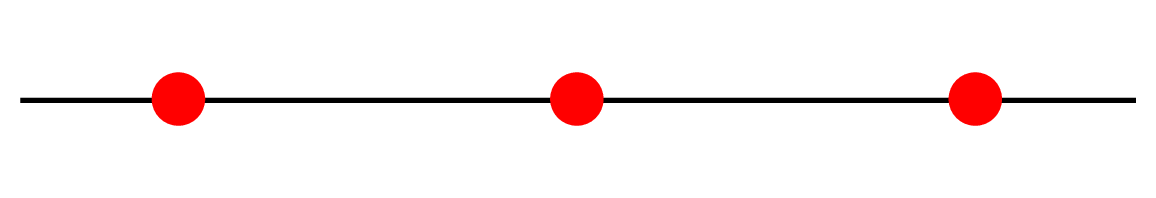}
&\includegraphics[width=0.25\textwidth]{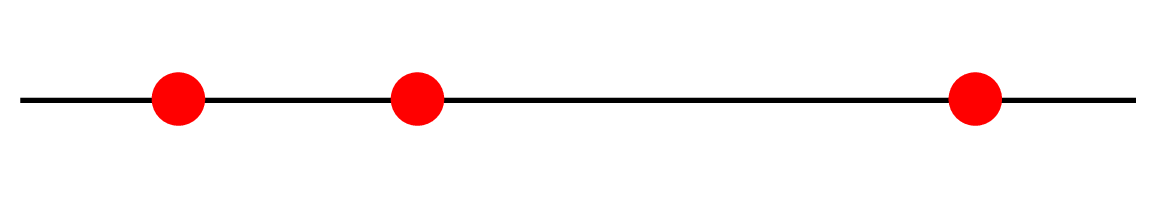}\\[0.2cm]
&$r_1,r_2,r_3\in\mathbb{R}$,&$r_1,r_2,r_3\in\mathbb{R}$, &$r_1,r_2,r_3\in\mathbb{R}$,\\
&$r_1-r_2<r_2-r_3$ & $r_1-r_2=r_2-r_3$ &$r_1-r_2>r_2-r_3$\\
\hline
&&&\\
$D_1=0$
&\raisebox{-0.4\height}{\includegraphics[width=0.25\textwidth]{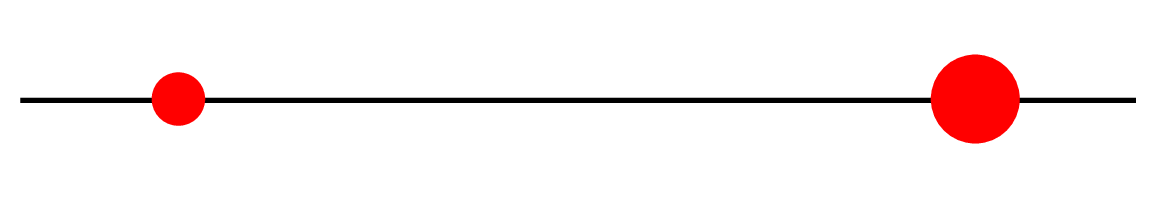}}
&\raisebox{-0.4\height}{\includegraphics[width=0.25\textwidth]{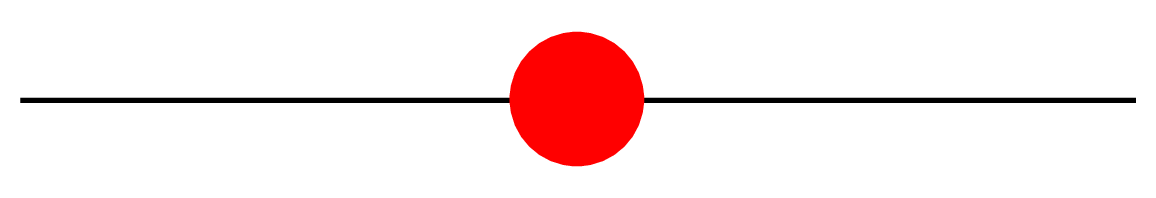}}
&\raisebox{-0.4\height}{\includegraphics[width=0.25\textwidth]{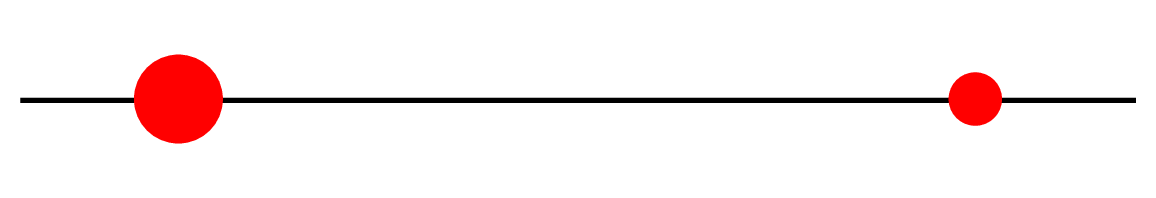}}\\[0.4cm]
&$r_1,r_2,r_3\in\mathbb{R}$ & $r_1,r_2,r_3\in\mathbb{R}$ & $r_1,r_2,r_3\in\mathbb{R}$  \\
& $r_1=r_2>r_3$ & $r_1=r_2=r_3$ &$r_1>r_2=r_3$\\
\hline
&&&\\[-0.2cm]
$D_1<0$
&\raisebox{-0.35\height}{\includegraphics[width=0.25\textwidth]{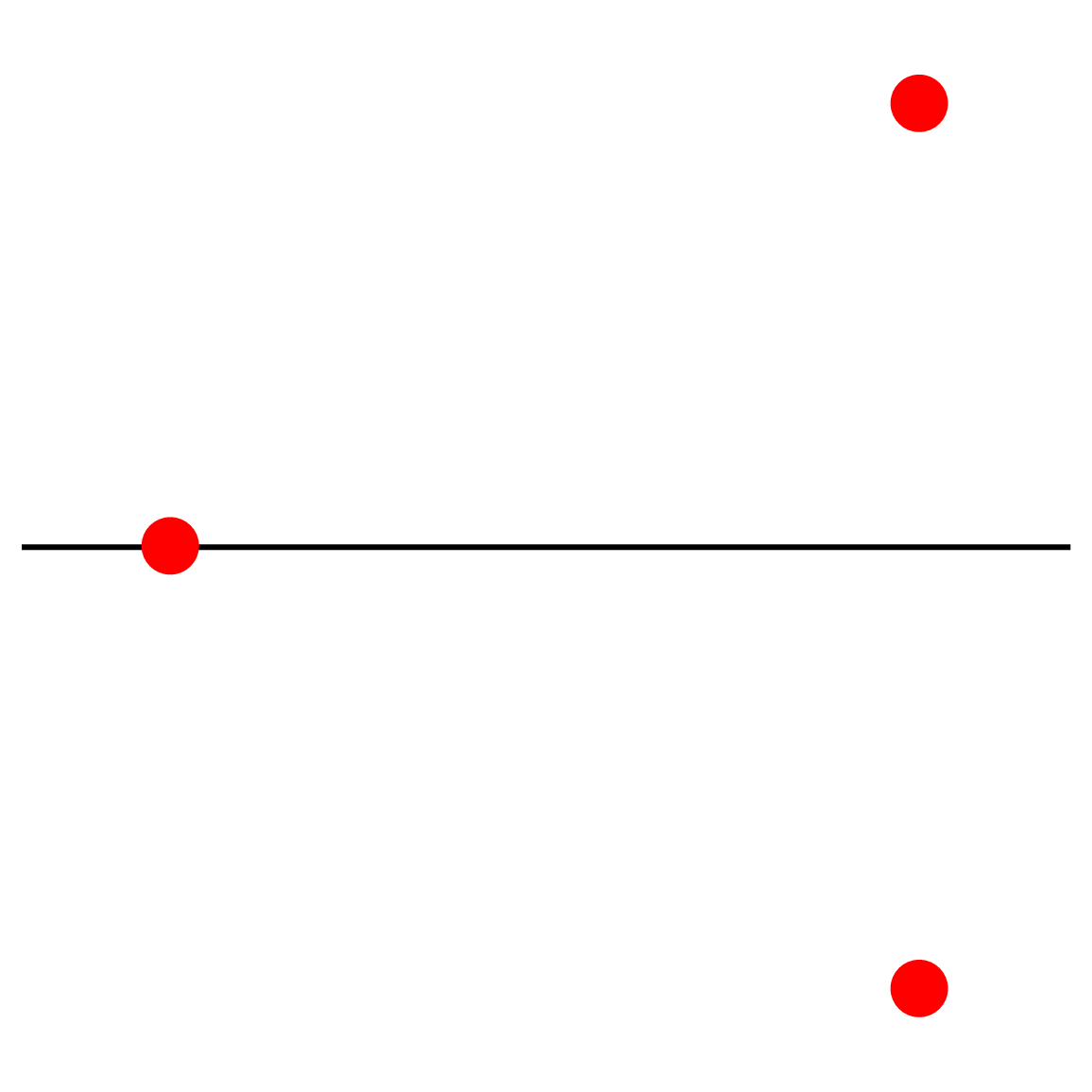}}
&\raisebox{-0.35\height}{\includegraphics[width=0.25\textwidth]{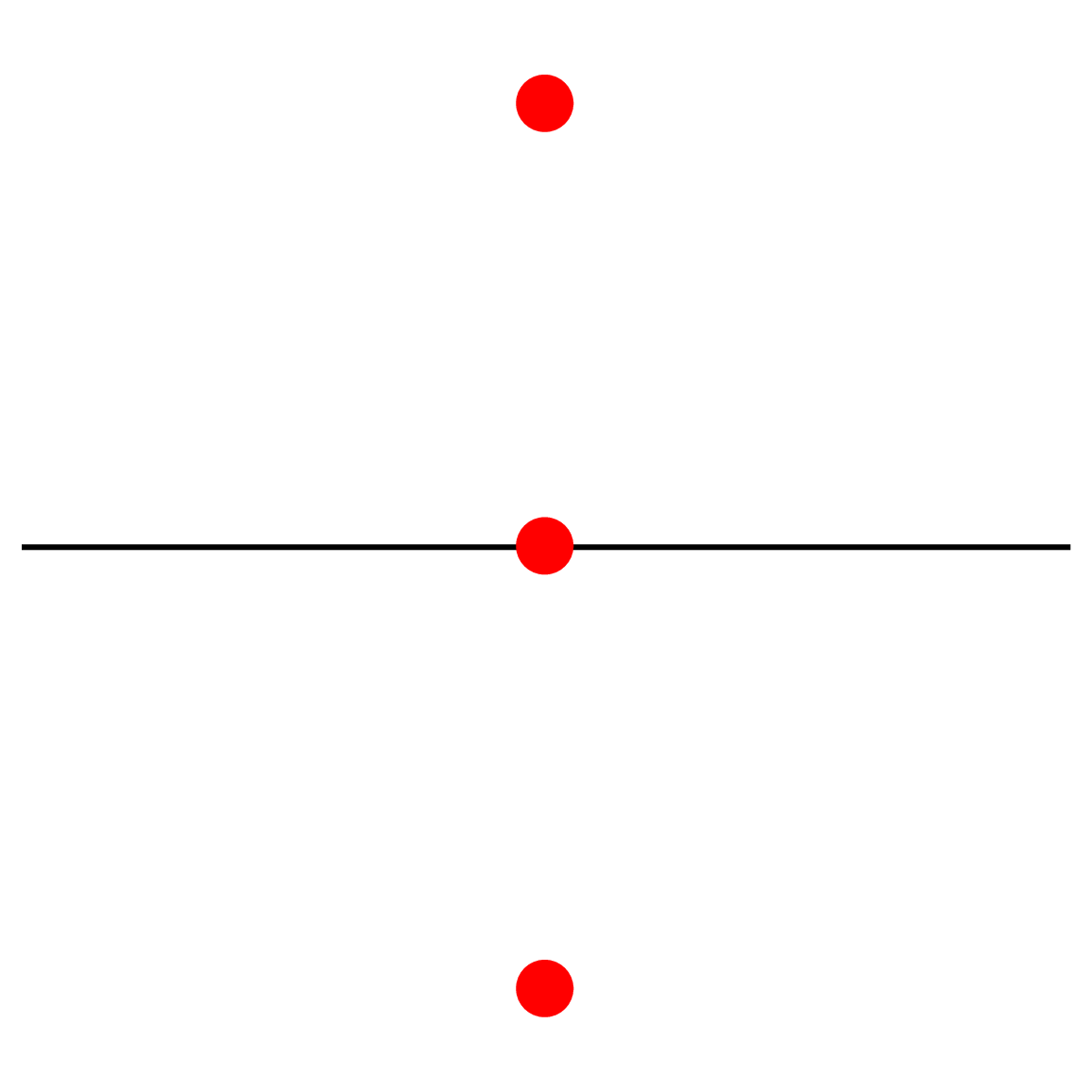}}
&\raisebox{-0.35\height}{\includegraphics[width=0.25\textwidth]{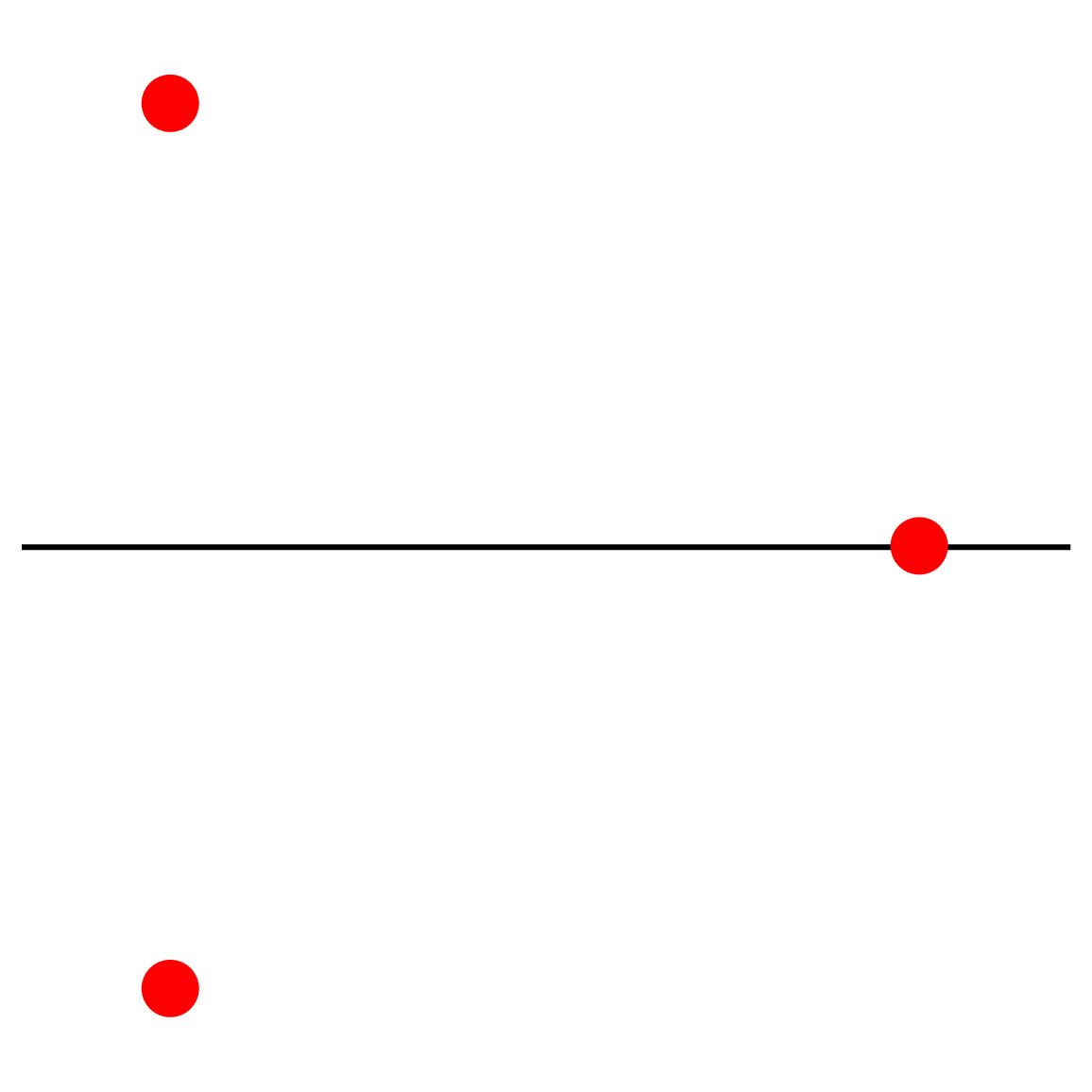}}\\
&&&\\[-0.2cm]
& $r_1,r_2\in\mathbb{C},r_3\in\mathbb{R}$ & $r_1,r_3\in\mathbb{C},r_2\in\mathbb{R}$ & $r_2,r_3\in\mathbb{C},r_1\in\mathbb{R}$ \\
& ${\rm Re}(r_1)={\rm Re}(r_2)>r_3$ &${\rm Re}(r_1)={\rm Re}(r_2)=r_3$ &$r_1>{\rm Re}(r_2)={\rm Re}(r_3)$  \\
\hline
\end{tabular*}
\end{center}
\end{table}

The second discriminant can also be used in the root formula with
radicals and to classify the types of configurations of the four
roots for the general quartic polynomial. The classification in this case
is somewhat involved and will be presented in a forthcoming paper \cite{HWYZ2016S}.

The second discriminant of a univariate polynomial $f$, a concept we have introduced, is defined as the product of all possible linear forms $2\,r_k-r_i-r_j$ in the roots $r_i,r_k,r_j$ of $f$ with $i<j\neq k$, so its vanishing is a necessary and sufficient condition for $f$ to have a symmetric triple of roots, i.e., a triple $(r_i, r_k, r_j)$ of roots of $f$ such that $r_k=(r_i+r_j)/2$. We have shown that the second discriminant of $f$ can be expressed as the resultant of $f$ and a determinant formed with the derivatives of $f$ and it possesses several notable properties\footnote{Our experiments also show that, when $a_0,\ldots,a_{n-1}$ take integer values,  $D_2\not\equiv 2 \mod 4$ for $n>3$.} and can be used to analyze the structure of the roots of $f$.

We may naturally consider the product of linear forms in $d$ roots of $f$ for any $n\geq d\geq 4$. The product should be symmetric with respect to the $n$ roots of $f$ and the linear form should be chosen such that its vanishing constrains the $d$ general roots of $f$
to form a degenerate configuration which is geometrically interesting.
Then one can try to establish conditions for $f$ to have $d$ roots forming the degenerate configuration.

For $n\geq d=4$, linear forms of interest in four roots $r_i, r_j, r_k, r_l$ of $f$ could be taken of the following type
\begin{equation}\label{d4av}
r_i+r_j-r_k-r_l,\quad \mbox{or} \quad 3\,r_l-r_i-r_j-r_k.\end{equation}
The former is twice the difference between the average of the two roots $r_i$ and $r_j$ and that of the two roots $r_k$ and $r_l$, while the latter
is three times the difference from the root $r_l$ to the average of the three
roots $r_i, r_j, r_k$. When the roots are considered as points in the complex
plane, the average of two or three roots may be interpreted as the middle point or the centroid of the two or three points, respectively.
Using the first linear form in \eqref{d4av}, one may define
\[D_3=\prod_{\scriptsize{\begin{array}{c}
{i\neq j\neq k\neq l}\\
i<j, k<l, i<k
\end{array}}}{(r_i+r_j-r_k-r_l)}.\]
For $n=4$, $D_3$ can be expressed as a polynomial in the coefficients of $f$ and this polynomial has been used in the root formula of $f$ with radicals. How to express $D_3$ as a polynomial in the coefficients of $f$ for arbitrary $n> 4$ and what properties $D_3$ may have are questions that remain for further investigation. Similar questions may be asked for $D_3$ defined by using the other linear form, and for $D_4$, $D_5$, \ldots, when they are properly defined.

It should be pointed out that the ideas and methodologies used in the study of $D_2$ provide a new approach to explore the properties of $D_1$. It may be generalized to investigate $D_3, D_4, \ldots$ and to discover other mysteries about the roots of $f$.

\Acknowledgements{Teo Mora pointed the master reference of 1906--1923 ``The History of Determinants in the Historical Order of Development"  by Sir Thomas Muir to us. We searched this reference (four volumes with more than 2\,000 pages) and other classic references in the literature and we could not find any notion nor any result related to our work.
Hoon  Hong pointed out that the proof of Theorem \ref{thm:D2} can be shortened  by using Orlando formula. We thank both Teo Mora  and Hoon  Hong for their personal communications and helpful comments. This work was supported by National Natural Science Foundation of China (Grant Nos. 61702025 and 11801101), the Special Fund for Guangxi Bagui Scholar Project,
Guangxi Science and Technology Program (Grant No. 2017AD23056),
and the Startup Foundation for Advanced Talents in Guangxi University
for Nationalities (Grant No 2015MDQD018).}

%    Insert the bibliography data here.

\end{document}